\newtheorem{theorem}{Theorem}[section]
 \newtheorem{lemma}[theorem]{Lemma}
 \newtheorem{proposition}[theorem]{Proposition}
\def\CC{{\mathbb C}}
 \def\RR{{\mathbb R}}
 \def\NN{{\mathbb N}}
 \def\SS{{\mathbb S}}
 \def\ZZ{{\mathbb Z}}
\def\SD{{\mathscr D}}
 \def\SF{{\mathscr F}}
 \title{\bf Ces\`{a}ro operator on  Hardy spaces associated with the Dunkl setting\,($\frac{2\lambda}{2\lambda+1}<p<\infty$)
 \thanks{1} \footnote{E-mail:
huzhuoran010@163.com[ZhuoRan Hu].}}
\author{{ZhuoRan Hu}\\
{\small Department of Mathematics, Capital Normal University}\\
{\small Beijing 100048, China}}
\begin{document}

\maketitle \setcounter{page}{1} \pagestyle{myheadings}
 \markboth{Hu}{Ces\`{a}ro operator on  Hardy spaces associated with the Dunkl setting\,($\frac{2\lambda}{2\lambda+1}<p<\infty$)}

\begin{abstract}
For  $p>\frac{2\lambda}{2\lambda+1}$ with $\lambda>0$, the Hardy spaces $H_{\lambda}^{p}(\RR^{2}_+)$  associated with the Dunkl transform $\SF_{\lambda}$ and the Dunkl operator $D_x$ on the line, where  $D_xf(x)=f'(x)+\frac{\lambda}{x}[f(x)-f(-x)]$, is the set of  function $F=u+iv$ on the upper half plane $\RR_+^2=\big\{(x, y): y>0\big\}$, satisfying  the $\lambda$-Cauchy-Riemann equations: $D_xu-\partial_y v=0, \partial_y u +D_xv=0$, and $\sup_{y>0}\int_{\RR}|F(x, y)||x|^{2\lambda}dx<0$.  In this paper, we will study  the boundedness of Ces\`{a}ro operator on $H_{\lambda}^{p}(\RR^{2}_+)$. We will prove the following inequality
$$ \|C_{\alpha}f\|_{H_{\lambda}^p(\RR_+^2)}\leq C\|f\|_{H_{\lambda}^p(\RR_+^2)},$$
for $\frac{2\lambda}{2\lambda+1}< p<\infty$, where C is dependent on $\alpha$, $p$, $\lambda$, and the average function for the Ces\`{a}ro operator $C_{\alpha}$ is   $\phi_{\alpha}(t)=\alpha(1-t)^{\alpha-1}$ with $\alpha>0$.

\vskip .2in
 \noindent
 {\bf 2000 MS Classification:}
 \vskip .2in
 \noindent
 {\bf Key Words and Phrases:}  Hardy spaces,
Dunkl setting, Ces\`{a}ro operator
 \end{abstract}

\setcounter{page}{1}

\section{Introduction}
The Hausdorff operators on the real Hardy spaces $H^p(\RR)$ was  initially studied by  Kanjin in \cite{Ka}.   Miyachi studied the  Boundedness of the Ces\`{a}ro Operator on the Real Hardy Spaces in \cite{Mi1},  and   the Ces\`{a}ro  operator on the   $H^p$ on the Unit Disk was studied in \cite{Xi}. A brief history of the study of the Ces\`{a}ro operator and  the Hausdorff operators can be found in \cite{Mi1}, \cite{Ka},\cite{Li},\cite{Li2} and  \cite{Xi}. The Ces\`{a}ro operator  in this paper is  defined as in \cite{Mi1}.

The purpose of this paper is to consider the boundedness of Ces\`{a}ro operator on the  Hardy spaces associated with the Dunkl setting on the upper half plane $H_{\lambda}^{p}(\RR^{2}_+)$,\,($\frac{2\lambda}{2\lambda+1}<p\leq\infty$). More details associated with the Dunkl setting and $H_{\lambda}^{p}(\RR^{2}_+)$ will be introduced in Section\,\ref{ce Hp2}. One difference about $H_{\lambda}^{p}(\RR^{2}_+)$ and $H^{p}(\RR^{2}_+)$ is that, $H^{p}(\RR)$ related to $H^{p}(\RR^{2}_+)$ is a Homogeneous Hardy spaces, that we have an atomic decomposition  for any function in  $H^{p}(\RR)$, but similar properties is not known on the  $H_{\lambda}^{p}(\RR^{2}_+)$ including the related real $H_\lambda^{p}(\RR)$. Thus we could not use the same way in \cite{Mi1} to prove the boundedness of Ces\`{a}ro operator on $H_{\lambda}^{p}(\RR^{2}_+)$.

Our  \textbf{main  result} is Theorem\,\ref{cesa0}. we will prove the following inequality
$$ \|C_{\alpha}f\|_{H_{\lambda}^p(\RR_+^2)}\leq C\|f\|_{H_{\lambda}^p(\RR_+^2)},$$
for $\frac{2\lambda}{2\lambda+1}< p<\infty$, where C is a constant dependent on $\alpha$, $p$ and $\lambda$.

Similar result like Theorem\,\ref{cesa0}  could not be extended to the case $\RR^d$ where $d\geq3$, because  many properties of $H_{\lambda}^p(\RR_+^d)$ in \cite{J.P. Anker} and \cite{J.P. Anker2} remain unknown.

 \textbf{Note}: We use $A\lesssim B$ to denote the estimate $|A|\leq CB$ for some  absolute universal constant $C>0$, which may vary from line to line.
$A\gtrsim B$ to denote the estimate $|A|\geq CB$ for some absolute universal constant $C>0$.
 $A\approx B$ or $A\sim B$ to denote the estimate $|A|\leq C_1B$, $|A|\geq C_2B$ for some absolute universal constant $C_1, C_2$.

\section{Ces\`{a}ro operator on the spaces $L^p(\RR^{N+1}_+, d\mu(\mathbf{x}))$ }\label{ce Lp}
In this Section, we will discuss the Ces\`{a}ro operator on $L^p(\RR^{N+1}_+, d\mu(\mathbf{x}))$, which is the set of vector function $\vec{f}(\vec{x})=(u_0(\vec{x}), u_1(\vec{x}), u_2(\vec{x})\cdots u_N(\vec{x}))$ defined on $\RR_+^{N+1}$ with $\vec{x}=(\mathbf{x}, y)=(x_1,\cdots,x_N, y)\in\RR_+^{N+1}$, where $y>0$ and $\mathbf{x}\in\RR^N$, $\mathbf{x}=(x_1,\cdots,x_N)\in\RR^{N}$. Let the measure $d\mu(\mathbf{x})$ to denote as
$d\mu(\mathbf{x})=|x_1|^{2\lambda_1} |x_2|^{2\lambda_2} \cdots |x_N|^{2\lambda_N} dx_1 dx_2 \cdots dx_N,\ \ \hbox{where}\,0<\lambda_i<\infty\ \hbox{for}\, 1\leq i\leq N.$

Then  $L^p(\RR^{N+1}_+, d\mu(\mathbf{x}))$ can be defined as the set of vector function $\vec{f}(\vec{x})$ satisfying
 $$\|\vec{f}\|_{L^p(\RR^{N+1}_+, d\mu(\mathbf{x}))}=\sup\limits_{y>0}\Big(\int_{\RR^N}|\vec{f}(\vec{x})|^pd\mu(\mathbf{x})\Big)^{1/p}<\infty,\ \hbox{where},\ \ 0<p<\infty.$$
 $L_{\mu}^p(\RR^N)$ denote as the set of measurable function $f(\mathbf{x})$ on $\RR^N$ satisfying $ \|f\|_{L_{\mu}^p(\RR^N)}=\Big(c_{\lambda}\int_{\RR^N}|f(\mathbf{x})|^pd\mu(\mathbf{x})\Big)^{1/p}$ $<\infty$,  with
$c_{\lambda}^{-1}=2^{\lambda+1/2}\Gamma(\lambda+1/2)$. To avoid confusion, we will use the symbol $\vec{f}$ to denote as a vector function, and $f$ to
denote as the ordinary measurable function.

Let $\alpha>0$, we write $\phi_{\alpha}(t)=\alpha(1-t)^{\alpha-1}$ for $0<t<1$.    For $\overrightarrow{f}(\mathbf{x},y)=(u_0(\mathbf{x}, y),u_1(\mathbf{x}, y),\cdots,u_N(\mathbf{x}, y)) \in L^p(\RR^{N+1}_+, d\mu(\mathbf{x}))$,  we  define the Ces\`{a}ro operator $C_{\alpha}$ by the form  as
$$(C_{\alpha}u_j)(\mathbf{x}, y)=\int_0^1 t^{-1}u_j\left(t^{-1}\mathbf{x}, t^{-1}y\right)\phi_{\alpha}(t) dt,\ \ \hbox{for}, \ \,0\leq j\leq N$$
where $ \mathbf{x}\in\RR^{N+1}_+, t^{-1}\mathbf{x}=(t^{-1}x_1, t^{-1}x_2,\cdots, t^{-1}x_N)$.
Then $C_{\alpha}\overrightarrow{f}$ and $\left|C_{\alpha}\overrightarrow{f}\right|$ can be given by
$$C_{\alpha}\overrightarrow{f}=(C_{\alpha}u_0, C_{\alpha}u_1,\cdots, C_{\alpha}u_N),$$
$$\left|C_{\alpha}\overrightarrow{f}\right|=\bigg((C_{\alpha}u_0)^2+ (C_{\alpha}u_1)^2+\cdots+ (C_{\alpha}u_N)^2\bigg)^{1/2}.$$
It is easy to verify the following Proposition\,\ref{ko} and Proposition\,\ref{Minkow}:
\begin{proposition}\label{ko}
For $\overrightarrow{f}(\mathbf{x},y)=(u_0(\mathbf{x}, y),u_1(\mathbf{x}, y),\cdots,u_N(\mathbf{x}, y)) \in L^p(\RR^{N+1}_+, d\mu(\mathbf{x}))$, where $0<p<\infty,$
$$\left|C_{\alpha}\overrightarrow{f}\right|\leq C_{\alpha}\left|\overrightarrow{f}\right|.$$

\end{proposition}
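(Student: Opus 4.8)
The plan is to read $C_{\alpha}$ as an averaging operator and then invoke the triangle inequality for vector-valued integrals. Note first that $d\nu(t):=\phi_{\alpha}(t)\,dt=\alpha(1-t)^{\alpha-1}\,dt$ is a (finite, in fact probability) measure on $(0,1)$. Fix a point $(\mathbf{x},y)\in\RR^{N+1}_+$ at which all the component integrals converge, and set, for $0<t<1$,
$$G(t)=\big(g_0(t),g_1(t),\dots,g_N(t)\big),\qquad g_j(t)=t^{-1}u_j\big(t^{-1}\mathbf{x},t^{-1}y\big),$$
so that the $j$-th component of $\big(C_{\alpha}\overrightarrow{f}\big)(\mathbf{x},y)$ is exactly $\int_0^1 g_j(t)\,d\nu(t)$, i.e. $\big(C_{\alpha}\overrightarrow{f}\big)(\mathbf{x},y)=\int_0^1 G(t)\,d\nu(t)$ componentwise, and hence $\big|C_{\alpha}\overrightarrow{f}\big|(\mathbf{x},y)=\big|\int_0^1 G(t)\,d\nu(t)\big|$ with $|\cdot|$ the Euclidean norm on $\RR^{N+1}$.

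Next I would establish the elementary bound $\big|\int_0^1 G\,d\nu\big|\le\int_0^1|G(t)|\,d\nu(t)$ by duality: taking $\eb=\big(\int_0^1 G\,d\nu\big)\big/\big|\int_0^1 G\,d\nu\big|$ when the denominator is nonzero (and $\eb$ arbitrary of unit length otherwise), the Cauchy--Schwarz inequality gives
$$\Big|\int_0^1 G\,d\nu\Big|=\Big\langle\int_0^1 G(t)\,d\nu(t),\,\eb\Big\rangle=\int_0^1\langle G(t),\eb\rangle\,d\nu(t)\le\int_0^1|G(t)|\,d\nu(t).$$
Then I would use the homogeneity of the norm under the positive scalar $t^{-1}$, namely
$$|G(t)|=\Big(\sum_{j=0}^N t^{-2}u_j\big(t^{-1}\mathbf{x},t^{-1}y\big)^2\Big)^{1/2}=t^{-1}\big|\overrightarrow{f}\big|\big(t^{-1}\mathbf{x},t^{-1}y\big),$$
so that $\int_0^1|G(t)|\,d\nu(t)=\int_0^1 t^{-1}\big|\overrightarrow{f}\big|\big(t^{-1}\mathbf{x},t^{-1}y\big)\phi_{\alpha}(t)\,dt=\big(C_{\alpha}\big|\overrightarrow{f}\big|\big)(\mathbf{x},y)$. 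Chaining the three displays yields $\big|C_{\alpha}\overrightarrow{f}\big|(\mathbf{x},y)\le\big(C_{\alpha}\big|\overrightarrow{f}\big|\big)(\mathbf{x},y)$, and since $(\mathbf{x},y)$ was arbitrary the asserted pointwise inequality follows.

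There is no genuine obstacle here: the content is precisely the triangle inequality for $\RR^{N+1}$-valued integrals together with the scaling identity for $|G(t)|$. The only item deserving a word of care is measurability/integrability — because $\overrightarrow{f}\in L^p(\RR^{N+1}_+,d\mu(\mathbf{x}))$ has measurable components, $t\mapsto G(t)$ is measurable for (a.e.) $(\mathbf{x},y)$, and wherever $C_{\alpha}|\overrightarrow{f}|$ is finite the computation above shows both sides of the inequality are simultaneously meaningful; where it is infinite the inequality is trivial.
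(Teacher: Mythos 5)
Your proof is correct; the paper itself offers no argument for this proposition (it is simply declared ``easy to verify''), and your verification --- the triangle inequality for $\RR^{N+1}$-valued integrals against the positive measure $\phi_{\alpha}(t)\,dt$, obtained by duality with Cauchy--Schwarz, plus the fact that the positive scalar $t^{-1}$ factors out of the Euclidean norm --- is exactly the standard argument the author is implicitly invoking. Nothing further is needed.
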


\begin{proposition}\label{Minkow}[Minkowski's inequality] For $1\leq p<\infty$, $\ \mathbf{x}=(x_1,\cdots,x_N)$,
if $$\int_0^1\left[\int_{\RR^N}|f(\mathbf{x}, t)|^pd\mu(\mathbf{x})\right]^{1/p}dt=M<\infty$$
then
$$\left[\int_{\RR^N}\left|\int_0^1 f(\mathbf{x}, t)dt\right|^pd\mu(\mathbf{x})\right]^{1/p}\leq\int_0^1\left[\int_{\RR^N}|f(\mathbf{x}, t)|^pd\mu(\mathbf{x})\right]^{1/p}dt=M<\infty.$$
\end{proposition}
By Proposition\,\ref{Minkow}, we could prove the following Proposition\,\ref{cesar1}
\begin{proposition}\label{cesar1}
Ces\`{a}ro operator $C_{\alpha}$ is a bounded linear operator on $L^p(\RR^{N+1}_+, d\mu(\mathbf{x}))$ for $1\leq p<\infty$:
$$ \|C_{\alpha}\overrightarrow{f}\|_{L^p(\RR^{N+1}_+, d\mu(\mathbf{x}))}\leq C\|\overrightarrow{f}\|_{L^p(\RR^{N+1}_+, d\mu(\mathbf{x}))}.$$
C is dependent on $\alpha$, $p$ and $d\mu$.

\end{proposition}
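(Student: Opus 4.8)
The plan is to obtain the estimate directly from Proposition~\ref{ko} and Proposition~\ref{Minkow}, together with the dilation behaviour of the measure $d\mu$. Linearity of $C_{\alpha}$ is immediate from its definition, so only the norm bound requires argument. First I would reduce to a scalar inequality: by Proposition~\ref{ko} we have the pointwise bound $\bigl|C_{\alpha}\overrightarrow{f}\bigr|(\mathbf{x},y)\le\bigl(C_{\alpha}|\overrightarrow{f}|\bigr)(\mathbf{x},y)$, and $|\overrightarrow{f}|$ is a nonnegative measurable function on $\RR^{N+1}_+$ with $\bigl\||\overrightarrow{f}|\bigr\|_{L^p(\RR^{N+1}_+,d\mu(\mathbf{x}))}=\|\overrightarrow{f}\|_{L^p(\RR^{N+1}_+,d\mu(\mathbf{x}))}$. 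Hence it suffices to prove $\|C_{\alpha}g\|_{L^p}\le C\|g\|_{L^p}$ for a nonnegative scalar $g\in L^p(\RR^{N+1}_+,d\mu(\mathbf{x}))$.

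Second, fix $y>0$ and regard $(C_{\alpha}g)(\mathbf{x},y)=\int_{0}^{1}t^{-1}\phi_{\alpha}(t)\,g(t^{-1}\mathbf{x},t^{-1}y)\,dt$ as an integral over $t\in(0,1)$ of the family $f(\mathbf{x},t)=t^{-1}\phi_{\alpha}(t)\,g(t^{-1}\mathbf{x},t^{-1}y)$. Applying Minkowski's inequality (Proposition~\ref{Minkow}, whose hypothesis is verified by the computation below) gives
$$\Bigl(\int_{\RR^{N}}(C_{\alpha}g)(\mathbf{x},y)^{p}\,d\mu(\mathbf{x})\Bigr)^{1/p}\le\int_{0}^{1}t^{-1}\phi_{\alpha}(t)\Bigl(\int_{\RR^{N}}g(t^{-1}\mathbf{x},t^{-1}y)^{p}\,d\mu(\mathbf{x})\Bigr)^{1/p}\,dt.$$
Now I would change variables $\mathbf{x}=t\mathbf{u}$ in the inner integral. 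Since $|tx_{i}|^{2\lambda_{i}}\,d(tx_{i})=t^{2\lambda_{i}+1}|x_{i}|^{2\lambda_{i}}\,dx_{i}$, the measure scales as $d\mu(t\mathbf{u})=t^{Q}\,d\mu(\mathbf{u})$ with $Q=N+2\sum_{i=1}^{N}\lambda_{i}$, so the inner integral equals $t^{Q}\int_{\RR^{N}}g(\mathbf{u},t^{-1}y)^{p}\,d\mu(\mathbf{u})\le t^{Q}\,\|g\|_{L^p(\RR^{N+1}_+,d\mu(\mathbf{x}))}^{p}$, using that $t^{-1}y>0$. Substituting and then taking the supremum over $y>0$ yields
$$\|C_{\alpha}g\|_{L^p(\RR^{N+1}_+,d\mu(\mathbf{x}))}\le\Bigl(\int_{0}^{1}t^{Q/p-1}\phi_{\alpha}(t)\,dt\Bigr)\,\|g\|_{L^p(\RR^{N+1}_+,d\mu(\mathbf{x}))}.$$

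Finally, with $\phi_{\alpha}(t)=\alpha(1-t)^{\alpha-1}$ the constant is $\int_{0}^{1}t^{Q/p-1}\phi_{\alpha}(t)\,dt=\alpha\,\Gamma(Q/p)\Gamma(\alpha)/\Gamma(Q/p+\alpha)$, which is finite for every $1\le p<\infty$ and $\alpha>0$ since $Q/p>0$; this is the desired $C$, depending only on $\alpha$, $p$ and $d\mu$ (through $Q$). Combining with the scalar reduction of the first step finishes the proof. I do not expect any serious obstacle here: the one point deserving care is identifying the exact power $Q$ of $t$ produced by $d\mu$ under the scaling $\mathbf{x}\mapsto t\mathbf{x}$, since it governs the integrability of $t^{Q/p-1}\phi_{\alpha}(t)$ near $t=0$; the restriction $p\ge1$ itself is needed only in order to invoke Minkowski's inequality.
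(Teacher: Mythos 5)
Your proof is correct and follows essentially the same route as the paper: the pointwise bound of Proposition~\ref{ko}, Minkowski's inequality from Proposition~\ref{Minkow}, and the change of variables giving the factor $t^{Q/p}$ with $Q=N+2\sum_i\lambda_i$ (the paper's $\beta$), leading to the convergent integral $\int_0^1 t^{Q/p-1}\phi_\alpha(t)\,dt$. The only differences are cosmetic: you reduce explicitly to a nonnegative scalar function and evaluate the constant as a Beta function, which the paper does not bother to do.
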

\begin{proof}
From Proposition\,\ref{ko} and the Minkowski's inequality in Proposition\,\ref{Minkow}, we could conclude:
\begin{eqnarray*}
\|C_{\alpha}\overrightarrow{f}\|_{L^p(\RR^{N+1}_+, d\mu(\mathbf{x}))}
&=&\sup_{y>0}\left[\int_{\RR^n}\left|C_{\alpha}\overrightarrow{f}(\mathbf{x}, y)\right|^pd\mu(\mathbf{x})\right]^{1/p}
\\&=&\sup_{y>0}\left[\int_{\RR^n}\left|\int_0^1 t^{-1}\overrightarrow{f}\left(t^{-1}\mathbf{x}, t^{-1}y\right)\phi_{\alpha}(t) dt\right|^pd\mu(\mathbf{x})\right]^{1/p}
\\&\leq&\sup_{y>0}\left[\int_{\RR^n}\left|\int_0^1 \left|\overrightarrow{f}\left(t^{-1}\mathbf{x}, t^{-1}y\right)\right|t^{-1}\phi_{\alpha}(t) dt\right|^pd\mu(\mathbf{x})\right]^{1/p}
\\&\leq&\left[\int_0^1\sup_{y>0}\left|\int_{\RR^n} |\overrightarrow{f}\left(t^{-1}\mathbf{x}, t^{-1}y\right)|^pd\mu(\mathbf{x})\right|^{1/p} t^{-1}\phi_{\alpha}(t) dt\right]
\\&\leq& \left[\int_0^1\|\overrightarrow{f}\|_{L^p(\RR^{N+1}_+, d\mu(\mathbf{x}))} t^{-1}t^{\beta/p}\phi_{\alpha}(t) dt\right]
\\&\leq& C\|\overrightarrow{f}\|_{L^p(\RR^{N+1}_+, d\mu(\mathbf{x}))},\ \ \ \ \hbox{where\, $\beta=N+\sum_{i=1}^{N}2\lambda_i$.}
\end{eqnarray*}

\end{proof}

\begin{lemma}\label{cesa2}
For $0<p\leq1$, $i\in\NN$, if $\sum_i|a_i|^p<\infty$, then
$$\left(\sum_i|a_i|\right)^p\leq \sum_i|a_i|^p .$$
\end{lemma}

\begin{proof}
Without loss of generality, we may assume that
$$\sum_i|a_i|^p=1.$$
In this case, we have $|a_i|\leq1$ for any $i\in\NN$, so
$$\left(\sum_i|a_i|\right)\leq \sum_i|a_i|^p|a_i|^{1-p} \leq \sum_i|a_i|^p=1 .$$
Then we could obtain:
$$\left(\sum_i|a_i|\right)^p\leq \sum_i|a_i|^p .$$
This proves the Lemma.
\end{proof}

\begin{lemma}\label{cesa3}
For $0<p\leq1$, $k\in\ZZ$, $k\leq-1$, $\mathbf{x}\in\RR^N$ there exists $\xi_k\in [2^{k-1}, 2^k]$ and $\xi'_k\in [1-2^k, 1-2^{k-1}]$ such that the following holds:
$$\left|\int_0^1 t^{-1}\overrightarrow{f}\left(t^{-1}\mathbf{x}, t^{-1}y\right)\phi_{\alpha}(t) dt\right|^p\leq C_\alpha\sum_{k=-\infty}^{-1}|\overrightarrow{f}\left(\xi_k^{-1}\mathbf{x}, \xi_k^{-1}y\right)|^p +C_\alpha\sum_{k=-\infty}^{-1}2^{ kp \alpha}|\overrightarrow{f}\left((\xi'_k)^{-1}\mathbf{x}, (\xi'_k)^{-1}y\right)|^p ,$$
where $C_\alpha$ is a constant independent on $\mathbf{x}$, $y$,  $k$ and $\overrightarrow{f}$.
\end{lemma}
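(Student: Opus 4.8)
The plan is to cut the $t$-integral over $(0,1)$ into two families of dyadic blocks, one accumulating at $t=0$ and one accumulating at $t=1$, to pull the $p$-th power inside the resulting sum via the elementary inequality of Lemma~\ref{cesa2}, and finally on each block to replace the integral by the value of $|\overrightarrow{f}|$ at a single well-chosen point of that block. The two cutoff families reflect the two sources of size in the kernel $t^{-1}\phi_\alpha(t)=\alpha t^{-1}(1-t)^{\alpha-1}$: the factor $t^{-1}$ is dangerous only near $t=0$, and $(1-t)^{\alpha-1}$ only near $t=1$ (when $\alpha<1$); on $[2^{k-1},2^k]$ the first is harmless and on $[1-2^k,1-2^{k-1}]$ the second is harmless, uniformly in $k\le-1$.

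Concretely, I would write $(0,\tfrac12]=\bigcup_{k\le-1}[2^{k-1},2^k]$ and $[\tfrac12,1)=\bigcup_{k\le-1}[1-2^k,1-2^{k-1}]$, so that
$$\int_0^1 t^{-1}\overrightarrow{f}\left(t^{-1}\mathbf{x},t^{-1}y\right)\phi_\alpha(t)\,dt=\sum_{k\le-1}\int_{2^{k-1}}^{2^k}(\cdots)\,dt+\sum_{k\le-1}\int_{1-2^k}^{1-2^{k-1}}(\cdots)\,dt.$$
Taking absolute values, using $|\int|\le\int|\cdot|$ on each block, and then Lemma~\ref{cesa2} with $0<p\le1$ (the desired bound being trivial when its right-hand side diverges), I obtain
$$\left|\int_0^1(\cdots)\,dt\right|^p\le\sum_{k\le-1}\left(\int_{2^{k-1}}^{2^k}t^{-1}\left|\overrightarrow{f}\right|\phi_\alpha(t)\,dt\right)^{p}+\sum_{k\le-1}\left(\int_{1-2^k}^{1-2^{k-1}}t^{-1}\left|\overrightarrow{f}\right|\phi_\alpha(t)\,dt\right)^{p},$$
where $|\overrightarrow{f}|$ abbreviates $|\overrightarrow{f}(t^{-1}\mathbf{x},t^{-1}y)|$. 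On $[2^{k-1},2^k]\subset(0,\tfrac12]$ one has $1-t\in[\tfrac12,1)$, so $\phi_\alpha(t)\le C_\alpha$ and $\int_{2^{k-1}}^{2^k}t^{-1}\phi_\alpha(t)\,dt\le C_\alpha\ln2$; on $[1-2^k,1-2^{k-1}]\subset[\tfrac12,1)$ one has $t^{-1}\le2$ and, after the substitution $s=1-t$, $\int_{1-2^k}^{1-2^{k-1}}t^{-1}\phi_\alpha(t)\,dt\le2\alpha\int_{2^{k-1}}^{2^k}s^{\alpha-1}\,ds\le C_\alpha 2^{k\alpha}$, with $C_\alpha$ depending only on $\alpha$ in both cases.

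It remains to trade each block integral for a point value. On a block $[a,b]$, writing $h(t)=t^{-1}\phi_\alpha(t)\ge0$ and $M=\int_a^b h(t)\,dt$, if one had $|\overrightarrow{f}(t^{-1}\mathbf{x},t^{-1}y)|<M^{-1}\int_a^b h(t)|\overrightarrow{f}(t^{-1}\mathbf{x},t^{-1}y)|\,dt$ for a.e.\ $t\in[a,b]$, then integrating against $h$ (which is strictly positive) would give a strict contradiction, the case of a vanishing block integral being trivial; hence there is a point $\xi\in[a,b]$ with $\int_a^b h(t)|\overrightarrow{f}(t^{-1}\mathbf{x},t^{-1}y)|\,dt\le M\,|\overrightarrow{f}(\xi^{-1}\mathbf{x},\xi^{-1}y)|$ (when $\overrightarrow{f}$ is continuous in $t$ this is just the first mean value theorem for integrals). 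Applying this with $M\le C_\alpha$ on $[2^{k-1},2^k]$ selects $\xi_k$, and with $M\le C_\alpha 2^{k\alpha}$ on $[1-2^k,1-2^{k-1}]$ selects $\xi'_k$; raising to the $p$-th power and absorbing $C_\alpha^{p}$ into $C_\alpha$ reproduces exactly the two sums in the statement. The only delicate point — and the place I would be most careful — is this selection step when $\overrightarrow{f}$ is merely measurable rather than continuous, which is handled by the measure-theoretic argument above, together with checking that the constants extracted from $\phi_\alpha$ near $t=0$ and near $t=1$ are genuinely uniform in $k$; no essential obstacle is expected.
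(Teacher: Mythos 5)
Your proof is correct and follows essentially the same route as the paper: the same dyadic decomposition of $(0,\tfrac12]$ and $[\tfrac12,1)$, Lemma~\ref{cesa2} to pull the exponent $p$ inside the sums, and a mean-value selection of $\xi_k$, $\xi'_k$ on each block with the uniform bounds $M\le C_\alpha$ and $M\le C_\alpha 2^{k\alpha}$. If anything you are more careful than the paper at the one delicate point, namely justifying the existence of the points $\xi_k$, $\xi'_k$ for merely measurable $\overrightarrow{f}$, which the paper simply asserts.
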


\begin{proof}By Proposition\,\ref{ko} and Lemma\,\ref{cesa2},
\begin{eqnarray}\label{f1}
\left|\int_0^1 t^{-1}\overrightarrow{f}\left(t^{-1}\mathbf{x}, t^{-1}y\right)\phi_{\alpha}(t) dt\right|^p\leq\left|\int_0^{1/2} t^{-1}|\overrightarrow{f}\left(t^{-1}\mathbf{x}, t^{-1}y\right)|\phi_{\alpha}(t) dt\right|^p+\left|\int_{1/2}^1 t^{-1}|\overrightarrow{f}\left(t^{-1}\mathbf{x}, t^{-1}y\right)|\phi_{\alpha}(t) dt\right|^p.
\end{eqnarray}
By Lemma\,\ref{cesa2}, we could conclude:
\begin{eqnarray*}
\left|\int_0^{1/2} t^{-1}|\overrightarrow{f}\left(t^{-1}\mathbf{x}, t^{-1}y\right)|\phi_{\alpha}(t) dt\right|^p&=&\left|\sum_{k=-\infty}^{-1}\int_{2^{k-1}}^{2^k} t^{-1}|\overrightarrow{f}\left(t^{-1}\mathbf{x}, t^{-1}y\right)|\phi_{\alpha}(t) dt\right|^p
\\&\leq&\sum_{k=-\infty}^{-1}\left|\int_{2^{k-1}}^{2^k} t^{-1}|\overrightarrow{f}\left(t^{-1}\mathbf{x}, t^{-1}y\right)|\phi_{\alpha}(t) dt\right|^p.
\end{eqnarray*}
We could deduce that there exists $\xi_k\in [2^{k-1}, 2^k]$ and a constant $C$ ($C=200$ for example) independent on $k$, $\overrightarrow{f}$,  $\mathbf{x}$, and $y$, such that the
following holds:
\begin{eqnarray*}
\int_{2^{k-1}}^{2^k} t^{-1}|\overrightarrow{f}\left(t^{-1}\mathbf{x}, t^{-1}y\right)|\phi_{\alpha}(t) dt&\leq& C \xi_k^{-1}|\overrightarrow{f}\left(\xi_k^{-1}\mathbf{x}, \xi_k^{-1}y\right)|\int_{2^{k-1}}^{2^k} \phi_{\alpha}(t) dt
\\ \nonumber&\leq& (1/2)^{\alpha}C |\overrightarrow{f}\left(\xi_k^{-1}\mathbf{x}, \xi_k^{-1}y\right)|\\ \nonumber&\lesssim_\alpha& |\overrightarrow{f}\left(\xi_k^{-1}\mathbf{x}, \xi_k^{-1}y\right)|.
\end{eqnarray*}
Then
\begin{eqnarray}\label{f2}
\left|\int_0^{1/2} t^{-1}|\overrightarrow{f}\left(t^{-1}\mathbf{x}, t^{-1}y\right)|\phi_{\alpha}(t) dt\right|^p \nonumber&\leq&\sum_{k=-\infty}^{-1}\left|\int_{2^{k-1}}^{2^k} t^{-1}|\overrightarrow{f}\left(t^{-1}\mathbf{x}, t^{-1}y\right)|\phi_{\alpha}(t) dt\right|^p \\&\leq&C\sum_{k=-\infty}^{-1}|\overrightarrow{f}\left(\xi_k^{-1}\mathbf{x}, \xi_k^{-1}y\right)|^p.
\end{eqnarray}
In the same way we could conclude that:
\begin{eqnarray*}
\left|\int_{1/2}^1 t^{-1}|\overrightarrow{f}\left(t^{-1}\mathbf{x}, t^{-1}y\right)|\phi_{\alpha}(t) dt\right|^p&=&\left|\sum_{k=-\infty}^{-1}\int_{1-2^{k}}^{1-2^{k-1}} t^{-1}|\overrightarrow{f}\left(t^{-1}\mathbf{x}, t^{-1}y\right)|\phi_{\alpha}(t) dt\right|^p
\\&\leq&\sum_{k=-\infty}^{-1}\left|\int_{1-2^{k}}^{1-2^{k-1}} t^{-1}|\overrightarrow{f}\left(t^{-1}\mathbf{x}, t^{-1}y\right)|\phi_{\alpha}(t) dt\right|^p.
\end{eqnarray*}
We could deduce that there exists $\xi'_k\in [1-2^k, 1-2^{k-1}]$, such that the
following holds:
\begin{eqnarray*}
\int_{1-2^k}^{1-2^{k-1}} t^{-1}|\overrightarrow{f}\left(t^{-1}\mathbf{x}, t^{-1}y\right)|\phi_{\alpha}(t) dt&\leq& C |\overrightarrow{f}\left((\xi'_k)^{-1}\mathbf{x}, (\xi'_k)^{-1}y\right)|\int_{1-2^k}^{1-2^{k-1}} t^{-1}\phi_{\alpha}(t) dt
\\ \nonumber&\leq&C2^{k\alpha} |\overrightarrow{f}\left((\xi'_k)^{-1}\mathbf{x}, (\xi'_k)^{-1}y\right)|.
\end{eqnarray*}
Thus
\begin{eqnarray}\label{f3}
\left|\int_{1/2}^1 t^{-1}|\overrightarrow{f}\left(t^{-1}\mathbf{x}, t^{-1}y\right)|\phi_{\alpha}(t) dt\right|^p&=&\left|\sum_{k=-\infty}^{-1}\int_{1-2^{k}}^{1-2^{k-1}} t^{-1}|\overrightarrow{f}\left(t^{-1}\mathbf{x}, t^{-1}y\right)|\phi_{\alpha}(t) dt\right|^p
\\ \nonumber  &\leq&\sum_{k=-\infty}^{-1}\left|\int_{1-2^{k}}^{1-2^{k-1}} t^{-1}|\overrightarrow{f}\left(t^{-1}\mathbf{x}, t^{-1}y\right)|\phi_{\alpha}(t) dt\right|^p
\\ \nonumber &\leq& C\sum_{k=-\infty}^{-1}2^{kp \alpha}|\overrightarrow{f}\left((\xi'_k)^{-1}\mathbf{x}, (\xi'_k)^{-1}y\right)|^p .
\end{eqnarray}
Then we could obtain the Lemma by Formulas\,(\ref{f1},\,\ref{f2},\,\ref{f3}).
\end{proof}

\begin{proposition}\label{cesa5}
For $0<p\leq1$, the Ces\`{a}ro operator $C_{\alpha}$ is bounded on  $L^p(\RR^{N+1}_+, d\mu(\mathbf{x}))$ spaces:
$$ \|C_{\alpha}\overrightarrow{f}\|_{L^p(\RR^{N+1}_+, d\mu(\mathbf{x}))}\leq C\|\overrightarrow{f}\|_{L^p(\RR^{N+1}_+, d\mu(\mathbf{x}))}.$$
C is a constant dependent on $\alpha$, $p$ and $d\mu$.
\end{proposition}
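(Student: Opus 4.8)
The plan is to bootstrap from the pointwise dyadic estimate of Lemma~\ref{cesa3}, integrate it against $d\mu(\mathbf{x})$, and use that $d\mu$ is homogeneous under dilations; this complements Proposition~\ref{cesar1}, which already settled the range $1\le p<\infty$ by Minkowski's inequality. Fix $y>0$. By Lemma~\ref{cesa3} (whose proof invokes Proposition~\ref{ko} and Lemma~\ref{cesa2}), for every $\mathbf{x}\in\RR^N$ there exist $\xi_k\in[2^{k-1},2^k]$ and $\xi_k'\in[1-2^k,1-2^{k-1}]$, $k\le-1$, such that
$$\big|C_\alpha\overrightarrow f(\mathbf{x},y)\big|^p\le C_\alpha\sum_{k=-\infty}^{-1}\big|\overrightarrow f(\xi_k^{-1}\mathbf{x},\xi_k^{-1}y)\big|^p+C_\alpha\sum_{k=-\infty}^{-1}2^{kp\alpha}\big|\overrightarrow f\big((\xi_k')^{-1}\mathbf{x},(\xi_k')^{-1}y\big)\big|^p .$$

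I would then integrate this inequality over $\RR^N$ against $d\mu(\mathbf{x})$. All terms being nonnegative, Tonelli's theorem lets me interchange $\int_{\RR^N}$ with the two sums. Set $\beta=N+\sum_{i=1}^N 2\lambda_i>0$, so that $d\mu(r\,\mathbf{z})=r^{\beta}\,d\mu(\mathbf{z})$ for every $r>0$. For each $k$ the dilation $\mathbf{z}=\xi_k^{-1}\mathbf{x}$ gives, since $0<\xi_k\le 2^k$ and $\beta>0$,
$$\int_{\RR^N}\big|\overrightarrow f(\xi_k^{-1}\mathbf{x},\xi_k^{-1}y)\big|^p\,d\mu(\mathbf{x})=\xi_k^{\beta}\int_{\RR^N}\big|\overrightarrow f(\mathbf{z},\xi_k^{-1}y)\big|^p\,d\mu(\mathbf{z})\le 2^{k\beta}\,\|\overrightarrow f\|_{L^p(\RR^{N+1}_+,d\mu(\mathbf{x}))}^p ,$$
and, since $1/2<\xi_k'<1$, the dilation $\mathbf{z}=(\xi_k')^{-1}\mathbf{x}$ gives
$$\int_{\RR^N}\big|\overrightarrow f\big((\xi_k')^{-1}\mathbf{x},(\xi_k')^{-1}y\big)\big|^p\,d\mu(\mathbf{x})=(\xi_k')^{\beta}\int_{\RR^N}\big|\overrightarrow f\big(\mathbf{z},(\xi_k')^{-1}y\big)\big|^p\,d\mu(\mathbf{z})\le\|\overrightarrow f\|_{L^p(\RR^{N+1}_+,d\mu(\mathbf{x}))}^p .$$
Because $\beta>0$ and $p\alpha>0$, the geometric series $\sum_{k\le-1}2^{k\beta}$ and $\sum_{k\le-1}2^{kp\alpha}$ both converge, so summing produces $\int_{\RR^N}|C_\alpha\overrightarrow f(\mathbf{x},y)|^p\,d\mu(\mathbf{x})\le C\,\|\overrightarrow f\|_{L^p(\RR^{N+1}_+,d\mu(\mathbf{x}))}^p$ with $C=C(\alpha,p,d\mu)$ independent of $y$. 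Taking the supremum over $y>0$ and then the $p$-th root yields $\|C_\alpha\overrightarrow f\|_{L^p(\RR^{N+1}_+,d\mu(\mathbf{x}))}\le C^{1/p}\|\overrightarrow f\|_{L^p(\RR^{N+1}_+,d\mu(\mathbf{x}))}$.

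The step I expect to be the main obstacle, and on which a careful write-up must dwell, is the dilation change of variables just used: the mean-value points $\xi_k,\xi_k'$ produced by Lemma~\ref{cesa3} depend on $\mathbf{x}$ (and on $y$), so the displayed identities are not literal substitutions and have to be justified. The natural remedy is to avoid passing to a single point $\xi_k$ before integrating: on each dyadic block one keeps the dilation variable $t$ explicit — using the elementary bounds $t^{-1}\phi_\alpha(t)\lesssim_\alpha 2^{-k}$ on $[2^{k-1},2^k]$ and $\int_{1-2^k}^{1-2^{k-1}}t^{-1}\phi_\alpha(t)\,dt\lesssim_\alpha 2^{k\alpha}$ on $[1-2^k,1-2^{k-1}]$, both already appearing in the proof of Lemma~\ref{cesa3} — and only afterwards performs $\mathbf{z}=t^{-1}\mathbf{x}$ with $t$ a genuine parameter, over which a final uniformly controlled integration is carried out; the homogeneity factors $2^{k\beta}$ and $2^{kp\alpha}$ then come out uniformly in $\mathbf{x}$. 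Once this bookkeeping of the $\mathbf{x}$-dependence is settled, the rest — the two convergent geometric series, whose convergence rests exactly on $\beta>0$ and $p\alpha>0$ — is routine, and I do not anticipate any further difficulty.
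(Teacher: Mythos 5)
Your main computation is the same as the paper's: integrate the pointwise bound of Lemma~\ref{cesa3} in $\mathbf{x}$, use the homogeneity $d\mu(r\mathbf{z})=r^{\beta}d\mu(\mathbf{z})$ to extract the factors $2^{k\beta}$ and $2^{kp\alpha}$, and sum two geometric series. You have also correctly put your finger on the step the paper passes over in silence: the mean-value points $\xi_k,\xi'_k$ of Lemma~\ref{cesa3} depend on $\mathbf{x}$ and $y$, so $\mathbf{z}=\xi_k^{-1}\mathbf{x}$ is not a legitimate change of variables. The problem is that your proposed repair does not close this hole. Keeping $t$ explicit, the quantity to control is $\int_{\RR^N}\bigl(\int_{J_k}t^{-1}\phi_\alpha(t)|\overrightarrow{f}(t^{-1}\mathbf{x},t^{-1}y)|\,dt\bigr)^p d\mu(\mathbf{x})$ with $J_k=[2^{k-1},2^k]$, and for $0<p<1$ there is no inequality of the form $\bigl(\int_{J_k}g\,dt\bigr)^p\lesssim\int_{J_k}g^p\,dt$: Jensen and H\"older run in the opposite direction, and Lemma~\ref{cesa2} is a statement about sums, not about integrals against Lebesgue measure on an interval. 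So after Tonelli you are left with exactly the expression you started from; the ``uniformly controlled integration in $t$'' you invoke is the whole difficulty, not a routine step.

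Moreover, the gap is not repairable at this level of generality, because the statement itself fails for general elements of $L^p(\RR^{N+1}_+,d\mu(\mathbf{x}))$ when $0<p<1$. Take $N=1$, $d\mu=|x|^{2\lambda}dx$, and $u_\epsilon(x,y)=\epsilon^{-1/p}\chi_{[1,1+\epsilon]}(x)\chi_{[1,2]}(y)$, so that $\|u_\epsilon\|_{L^p(\RR^{2}_+,d\mu)}\approx1$. For $y=1/2$ and $x\in[0.3,0.5]$ one has $\chi_{[1,2]}(y/t)=1$ exactly for $t\in[1/4,1/2]$, while $\chi_{[1,1+\epsilon]}(x/t)=1$ on an interval of length at least $\epsilon/4$ contained in $[1/4,1/2]$, on which $t^{-1}\phi_\alpha(t)\geq c_\alpha>0$; hence $(C_\alpha u_\epsilon)(x,1/2)\geq c_\alpha\epsilon^{1-1/p}/4$ there, and
$$\int_{\RR}\bigl|(C_\alpha u_\epsilon)(x,1/2)\bigr|^p|x|^{2\lambda}dx\gtrsim_{\alpha,p,\lambda}\epsilon^{p-1}\longrightarrow\infty\qquad(\epsilon\to0^+).$$
So $C_\alpha$ is unbounded on this space for $p<1$, and any correct treatment of the range $\frac{2\lambda}{2\lambda+1}<p\leq1$ must exploit the $\lambda$-analyticity of $F$ (as Proposition~\ref{cesa4} begins to do via Lemma~\ref{Hardy-thm-0}) rather than a purely measure-theoretic bound on horizontal slices.
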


\begin{proof}
By Lemma\,\ref{cesa3}, we have that:
\begin{eqnarray*}
& &\|C_{\alpha}\overrightarrow{f}\|_{L^p(\RR^{N+1}_+, d\mu(\mathbf{x}))}^p
\\&\leq& C\sup_{y>0}\int_{\RR^N}\sum_{k=-\infty}^{-1}|\overrightarrow{f}\left(\xi_k^{-1}\mathbf{x}, \xi_k^{-1}y\right)|^pd\mu(\mathbf{x}) +C\sup_{y>0}\int_{\RR^N}\sum_{k=-\infty}^{-1}2^{ kp \alpha}|\overrightarrow{f}\left((\xi'_k)^{-1}\mathbf{x}, (\xi'_k)^{-1}y\right)|^pd\mu(\mathbf{x})
\\&\leq&C\sum_{k=-\infty}^{-1}\left(2^{\beta k}+2^{kp\alpha}\right)\|\overrightarrow{f}\|_{L^p(\RR^{N+1}_+, d\mu(\mathbf{x}))}^p,\ \ \ \ \hbox{where\, $\beta=N+\sum_{i=1}^{N}2\lambda_i$.}
\end{eqnarray*}

This proves the proposition.
\end{proof}

From Proposition\,\ref{cesar1} and Proposition\,\ref{cesa5}, we could obtain the following theorem:
\begin{theorem}\label{cesar2}
For $0<p<\infty$, the Ces\`{a}ro operator $C_{\alpha}$ is bounded on  $L^p(\RR^{N+1}_+, d\mu(\mathbf{x}))$ spaces:
$$ \|C_{\alpha}\overrightarrow{f}\|_{L^p(\RR^{N+1}_+, d\mu(\mathbf{x}))}\leq C\|\overrightarrow{f}\|_{L^p(\RR^{N+1}_+, d\mu(\mathbf{x}))}.$$
C is a constant dependent on $\alpha$, $p$ and $d\mu$. Thus the Ces\`{a}ro operator $C_{\alpha}$ is well defined on   $L^p(\RR^{N+1}_+, d\mu(\mathbf{x}))$ spaces.
\end{theorem}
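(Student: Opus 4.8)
The plan is to obtain Theorem~\ref{cesar2} by patching together the two ranges that have already been treated, since the interval $(0,\infty)$ is covered by $(0,1]$ and $[1,\infty)$ with overlap only at $p=1$. For $0<p\le 1$, Proposition~\ref{cesa5} gives directly
$$\|C_{\alpha}\overrightarrow{f}\|_{L^p(\RR^{N+1}_+, d\mu(\mathbf{x}))}\le C\|\overrightarrow{f}\|_{L^p(\RR^{N+1}_+, d\mu(\mathbf{x}))},$$
with $C$ depending only on $\alpha$, $p$ and $d\mu$; for $1\le p<\infty$, Proposition~\ref{cesar1} gives the same inequality (and, in that range, additionally that $C_{\alpha}$ is linear). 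The two estimates are consistent at $p=1$, so for each given $p\in(0,\infty)$ one simply selects the constant furnished by whichever proposition applies, and the claimed bound holds on the entire scale.

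Next I would record the well-definedness assertion. By Proposition~\ref{ko} we have the pointwise bound $|C_{\alpha}\overrightarrow{f}|\le C_{\alpha}|\overrightarrow{f}|$, so it suffices to note that the scalar integral defining $(C_{\alpha}u_j)(\mathbf{x},y)$ converges for almost every $(\mathbf{x},y)\in\RR^{N+1}_+$: near $t=1$ this is immediate because $\phi_{\alpha}(t)=\alpha(1-t)^{\alpha-1}$ is integrable there, while near $t=0$ the control comes from the change of variables used in the proof of Proposition~\ref{cesar1} (and, for $p\le1$, from the dyadic decomposition of Lemma~\ref{cesa3}), combined with the fact that $\overrightarrow{f}\in L^p(\RR^{N+1}_+, d\mu(\mathbf{x}))$ bounds all the dilates $\overrightarrow{f}(t^{-1}\mathbf{x},t^{-1}y)$ uniformly in the relevant norm. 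Hence $C_{\alpha}\overrightarrow{f}$ is a well-defined element of $L^p(\RR^{N+1}_+, d\mu(\mathbf{x}))$ for every $0<p<\infty$, which is the final clause of the theorem.

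I do not anticipate any genuine obstacle: all the analytic content — the Minkowski-inequality argument of Proposition~\ref{cesar1} for $p\ge1$, and the dyadic splitting together with Lemma~\ref{cesa2} in Proposition~\ref{cesa5} for $p\le1$ — is already in place, and what remains is only the bookkeeping of merging the two regimes. The single point requiring a word of care is that for $p<1$ the space $L^p(\RR^{N+1}_+, d\mu(\mathbf{x}))$ is only a quasi-normed space, so \emph{bounded} is to be understood in the quasi-norm sense; but the inequality in Proposition~\ref{cesa5} is already stated in exactly that form, so no adjustment is needed.
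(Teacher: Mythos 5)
Your proposal is correct and matches the paper exactly: the theorem is obtained there by simply combining Proposition~\ref{cesar1} (for $1\le p<\infty$) with Proposition~\ref{cesa5} (for $0<p\le1$), with no further argument given. Your additional remarks on well-definedness and the quasi-norm caveat for $p<1$ are harmless elaborations of the same route.
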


\section{Ces\`{a}ro operator on the spaces $H_{\lambda}^{p}(\RR^{2}_+)$ }\label{ce Hp2}
In this section, we will discuss the {Ces\`{a}ro operator on the spaces $H_{\lambda}^{p}(\RR^{2}_+)$. First, we will introduce some basic concept associated with the Dunkl setting and $H_{\lambda}^{p}(\RR^{2}_+)$, then, with the Theorem\,\ref{cesar2} we obtained in previous section, we will prove the boundedness of  {Ces\`{a}ro operator on the spaces $H_{\lambda}^{p}(\RR^{2}_+)$.

\subsection{The $\lambda$-translation and the $\lambda$-convolution}
In\,\cite{ZhongKai Li 3}, a theory about the Hardy spaces on the half-plane $\RR^2_+=\left\{(x, y): x\in\RR, y>0\right\}$ associated with the Dunkl transform on the line $\RR$ is developed. For $0<p<\infty$, $L_{\lambda}^p(\RR)$ is the set of measurable functions satisfying
$ \|f\|_{L_{\lambda}^p}=\Big(c_{\lambda}\int_{\RR}|f(x)|^p|x|^{2\lambda}dx\Big)^{1/p}$ $<\infty$ with
$c_{\lambda}^{-1}=2^{\lambda+1/2}\Gamma(\lambda+1/2)$,
and $p=\infty$ is the usual $L^\infty(\RR)$ space.
For $\lambda\geq0$, The Dunkl operator on the line is\,(cf. \cite{Du3},\cite{Du6}):
$$D_xf(x)=f'(x)+\frac{\lambda}{x}[f(x)-f(-x)]$$
involving a reflection part.  We assume $\lambda>0$ in
what follows.
The Dunkl transfrom  for $f\in L_{\lambda}^1(\RR)$ is given by:
\begin{eqnarray}\label{fourier}
(\SF_{\lambda}f)(\xi)=c_{\lambda}\int_{\RR}f(x)E_\lambda(-ix\xi)|x|^{2\lambda}dx,\quad
\xi\in\RR ,
\end{eqnarray}
where $E_{\lambda}(-ix\xi)$ is the Dunkl kernel
$$E_{\lambda}(iz)=j_{\lambda-1/2}(z)+\frac{iz}{2\lambda+1}j_{\lambda+1/2}(z),\ \  z\in\CC$$
and $j_{\alpha}(z)$ is the normalized Bessel function
$$j_{\alpha}(z)=2^{\alpha}\Gamma(\alpha+1)\frac{J_{\alpha}(z)}{z^{\alpha}}=\Gamma(\alpha+1)\sum_{n=0}^{\infty}\frac{(-1)^n(z/2)^{2n}}{n!\Gamma(n+\alpha+1)} .$$
Since $j_{\lambda-1/2}(z)=\cos z$, $j_{\lambda+1/2}(z)=z^{-1}\sin z$. It follows that $E_0(iz)=e^{iz}$, and $\SF_{0}$ agrees with the usual Fourier transform. $E_\lambda(iz)$ can  also be represented by the following \cite{Ro1}
\begin{eqnarray}\label{D-kernel-3}
E_\lambda(iz)=c_{\lambda}'\int_{-1}^1e^{izt}(1+t)(1-t^2)^{\lambda-1}dt,
\ \ \ \ \ \
c'_{\lambda}=\frac{\Gamma(\lambda+1/2)}{\Gamma(\lambda)\Gamma(1/2)}.
\end{eqnarray}
where $E_{\lambda}(ix\xi)$ satisfies
\begin{eqnarray}\label{D-kernel-2}
D_x[E_{\lambda}(ix\xi)]=i\xi E_{\lambda}(ix\xi), \ \ \ \ \ \hbox{and} \
\ \ \ E_{\lambda}(ix\xi)|_{x=0}=1.
\end{eqnarray}
For $x,t,z\in\RR$, we set
$W_{\lambda}(x,t,z)=W_{\lambda}^0(x,t,z)(1-\sigma_{x,t,z}+\sigma_{z,x,t}+\sigma_{z,t,x})$,
where
$$
W_{\lambda}^0(x,t,z)=\frac{c''_{\lambda}|xtz|^{1-2\lambda}\chi_{(||x|-|t||,
|x|+|t|)}(|z|)} {[((|x|+|t|)^2-z^2)(z^2-(|x|-|t|)^2)]^{1-\lambda}},
$$
$c''_{\lambda}=2^{3/2-\lambda}\big(\Gamma(\lambda+1/2)\big)^2/[\sqrt{\pi}\,\Gamma(\lambda)]$,
and  $\sigma_{x,t,z}=\frac{x^2+t^2-z^2}{2xt}$ for $x,t\in
\RR\setminus\{0\}$,
and 0 otherwise. From\,\cite{Ro1}, we have

\begin{proposition}\label{Dunkl-kernel-P}
The Dunkl kernel $E_\lambda$ satisfies the following product formula:
\begin{eqnarray}\label{product-D-1}
E_\lambda(x\xi)E_\lambda(t\xi)=\int_{\RR}E_\lambda(z\xi)d\nu_{x,t}(z),\quad
x,t\in\RR \ ~\hbox{and} ~ \ \xi\in\CC,
\end{eqnarray}
where $\nu_{x,t}$ is a signed measure given by
$d\nu_{x,t}(z)=c_{\lambda}W_\lambda(x,t,z)|z|^{2\lambda}dz$
for $x,t\in \RR\setminus\{0\}$, $d\nu_{x,t}(z)=d\delta_x(z)$ for $t=0$,
 and $d\nu_{x,t}(z)=d\delta_t(z)$ for $x=0$.
\end{proposition}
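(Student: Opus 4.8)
This is R\"osler's rank-one product formula, so the plan is to reduce both sides of \eqref{product-D-1} to Fourier transforms of explicit measures and then match them. First, both sides of \eqref{product-D-1} are entire in $\xi$, so it suffices to prove the identity for $\xi=i\eta$ with $\eta\in\RR$. The degenerate cases are immediate: if $t=0$ then $E_\lambda(t\xi)=E_\lambda(0)=1$ by \eqref{D-kernel-2}, while $\int_\RR E_\lambda(z\xi)\,d\delta_x(z)=E_\lambda(x\xi)$, so \eqref{product-D-1} holds; the case $x=0$ is symmetric. So fix $x,t\in\RR\setminus\{0\}$.

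Using the integral representation \eqref{D-kernel-3}, written as $E_\lambda(w)=c'_\lambda\int_{-1}^1 e^{wu}(1+u)(1-u^2)^{\lambda-1}\,du$, expand
\[
E_\lambda(x\xi)\,E_\lambda(t\xi)=(c'_\lambda)^2\int_{-1}^1\!\!\int_{-1}^1 e^{i\eta(xs+tr)}(1+s)(1+r)\big[(1-s^2)(1-r^2)\big]^{\lambda-1}\,ds\,dr ,
\]
and similarly $\int_\RR E_\lambda(z\xi)\,d\nu_{x,t}(z)=c'_\lambda\int_\RR\!\int_{-1}^1 e^{i\eta z u}(1+u)(1-u^2)^{\lambda-1}\,du\,d\nu_{x,t}(z)$. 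By uniqueness of the Fourier transform of finite signed measures, \eqref{product-D-1} is then equivalent to the identity obtained by pushing forward $(c'_\lambda)^2(1+s)(1+r)[(1-s^2)(1-r^2)]^{\lambda-1}\,ds\,dr$ under $(s,r)\mapsto xs+tr$ and $c'_\lambda(1+u)(1-u^2)^{\lambda-1}\,du\otimes d\nu_{x,t}(z)$ under $(u,z)\mapsto zu$. Thus the whole statement collapses to one explicit computation of a marginal of a planar measure.

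To run that computation I would split $E_\lambda$ into its even and odd parts via $E_\lambda(iz)=j_{\lambda-1/2}(z)+\tfrac{iz}{2\lambda+1}j_{\lambda+1/2}(z)$, so that $E_\lambda(x\xi)E_\lambda(t\xi)$ breaks into an even$\times$even, two even$\times$odd, and an odd$\times$odd piece. The key analytic input for the even$\times$even term is the classical Gegenbauer product formula for the normalized Bessel function,
\[
j_\mu(a)\,j_\mu(b)=\frac{\Gamma(\mu+1)}{\Gamma(\mu+\tfrac12)\,\Gamma(\tfrac12)}\int_0^\pi j_\mu\!\big(\sqrt{a^2+b^2-2ab\cos\theta}\,\big)(\sin\theta)^{2\mu}\,d\theta,\qquad \mu>-\tfrac12 ,
\]
applied with $\mu=\lambda-\tfrac12$, together with its companions expressing $j_\mu(a)\,j_{\mu+1}(b)$ and $j_{\mu+1}(a)\,j_{\mu+1}(b)$ through integrals of $j_\mu$ and $j_{\mu+1}$ against the same angular weight. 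Substituting $z=\sqrt{x^2+t^2-2xt\cos\theta}$ turns $\theta$ into a radial variable on $\big(\,||x|-|t||,\ |x|+|t|\,\big)$; since $z\,dz=|x||t|\sin\theta\,d\theta$ and $\sin^2\theta=\big((|x|+|t|)^2-z^2\big)\big(z^2-(|x|-|t|)^2\big)/(4x^2t^2)$, the weight $(\sin\theta)^{2\mu}\,d\theta$ becomes a constant multiple of $|xtz|^{1-2\lambda}\big[\big((|x|+|t|)^2-z^2\big)\big(z^2-(|x|-|t|)^2\big)\big]^{\lambda-1}|z|^{2\lambda}\,dz$, i.e.\ the factor $W_\lambda^0(x,t,z)|z|^{2\lambda}$. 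Note $\cos\theta=\sigma_{x,t,z}$; the even$\times$odd and odd$\times$odd cross terms then contribute exactly the rational corrections $\sigma_{z,x,t}$ and $\sigma_{z,t,x}$, and keeping track of the signs of $x$ and $t$ (the Bessel formula is stated for positive arguments, whereas $E_\lambda$ lives on all of $\RR$) forces the symmetrized combination $1-\sigma_{x,t,z}+\sigma_{z,x,t}+\sigma_{z,t,x}$ and pins down the constant $c''_\lambda$. Collecting everything yields $d\nu_{x,t}(z)=c_\lambda W_\lambda(x,t,z)|z|^{2\lambda}\,dz$, as claimed.

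\textbf{Main obstacle.} The substance is entirely in the last paragraph: correctly assembling the four Bessel pieces, performing the change of variables, and verifying that the cross terms together with the sign reflections recombine into precisely the weight $W_\lambda$ with the stated normalization. A subsidiary point is justifying the interchange of integrations (Fubini) in the parameter ranges where \eqref{D-kernel-3} is only conditionally convergent near $u=\pm1$; the clean fix is to establish the identity first for $\lambda$ in a range where every integral converges absolutely and then extend in $\lambda$ by analytic continuation, both sides of \eqref{product-D-1} being holomorphic in $\lambda$. Since the proposition is quoted from \cite{Ro1}, one may instead simply invoke R\"osler's original argument there.
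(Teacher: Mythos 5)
The paper does not prove this proposition at all: it is imported verbatim from R\"osler \cite{Ro1} (``From \cite{Ro1}, we have\dots''), so there is no in-paper argument to compare against. Your sketch reconstructs the standard route to the rank-one product formula, and its skeleton is sound: the degenerate cases $x=0$ or $t=0$ are indeed immediate from $E_\lambda(0)=1$; reducing to $\xi=i\eta$, $\eta\in\RR$, is legitimate because both sides are entire in $\xi$ and $\nu_{x,t}$ is a compactly supported finite signed measure; splitting $E_\lambda(iz)=j_{\lambda-1/2}(z)+\tfrac{iz}{2\lambda+1}j_{\lambda+1/2}(z)$ and applying the Gegenbauer product formula is exactly how the kernel $W_\lambda$ arises; and your change of variables $z=\sqrt{x^2+t^2-2xt\cos\theta}$ with $z\,dz=|xt|\sin\theta\,d\theta$ does convert $(\sin\theta)^{2\lambda-1}d\theta$ into a constant multiple of $W_\lambda^0(x,t,z)\,|z|^{2\lambda}dz$, as one can check directly from $\sin^2\theta=\big((|x|+|t|)^2-z^2\big)\big(z^2-(|x|-|t|)^2\big)/(4x^2t^2)$. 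That said, what you have is a proof outline, not a proof: the entire content of the proposition beyond the radial part lies in showing that the even$\times$odd and odd$\times$odd pieces, after the same substitution and after tracking the signs of $x$ and $t$, combine into precisely the factor $1-\sigma_{x,t,z}+\sigma_{z,x,t}+\sigma_{z,t,x}$ with the normalization $c''_\lambda$ --- and you describe this step rather than carry it out, as you yourself concede under ``Main obstacle.'' One small correction: for $\lambda>0$ the weight $(1-u^2)^{\lambda-1}$ in \eqref{D-kernel-3} is absolutely integrable, so the Fubini interchange needs no analytic continuation in $\lambda$. Given that the paper treats this as a cited external result, invoking \cite{Ro1} is the appropriate resolution; if you want a self-contained proof, the cross-term computation must actually be written out.
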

If $t\neq0$, for an appropriate function $f$ on $\RR$, the  $\lambda$-translation is given by
\begin{eqnarray}\label{tau-1}
(\tau_tf)(x)=c_{\lambda}\int_{\RR}f(z)W_\lambda(x,t,z)|z|^{2\lambda}dz;
\end{eqnarray}
and if $t=0$, $(\tau_0f)(x)=f(x)$.
If $(\tau_tf)(x)$ is taken as a function of $t$ for a given $x$,
we may set $(\tau_tf)(0)=f(t)$ as  a complement.
An unusual fact is that
$\tau_t$ is not a positive operator in general \cite{Ro1}.
If $(x, t)\neq(0,0)$, an equivalent form of $(\tau_tf)(x)$ is given by\,\cite{Ro1},
\begin{eqnarray}\label{tau-2}
(\tau_tf)(x)=c'_{\lambda}\int_0^\pi \bigg(f_e(\langle
x,t\rangle_\theta)+ f_o(\langle
x,t\rangle_\theta)\frac{x+t}{\langle
x,t\rangle_\theta}\bigg)(1+\cos\theta)\sin^{2\lambda-1}\theta
d\theta
\end{eqnarray}
where $x,t\in\RR$, $f_e(x)=(f(x)+f(-x))/2$,
$f_o(x)=(f(x)-f(-x))/2$, $\langle
x,t\rangle_\theta=\sqrt{x^2+t^2+2xt\cos\theta}$.
For two appropriated function $f$ and $g$, their $\lambda$-convolution
 $f\ast_{\lambda}g$ is defined by
\begin{eqnarray}\label{convolution-10}
(f\ast_{\lambda}
g)(x)=c_{\lambda}\int_{\RR}(\tau_xf)(-t)g(t)|t|^{2\lambda}dt.
\end{eqnarray}
The properties of $\tau$ and $\ast_{\lambda}$ are listed as follows\,\cite{ZhongKai Li 3}:

\begin{proposition}\label{tau-convolution-a} {\rm(i)} \ If $f\in L_{\lambda,{\rm loc}}(\RR)$,
then for all $x,t\in\RR$, $(\tau_tf)(x)=(\tau_xf)(t)$, and
$(\tau_t\tilde{f})(x)=(\widetilde{\tau_{-t}f})(x)$, where $\widetilde{f}(x)=f(-x)$.

{\rm(ii)}  \ For all $1\le p\le\infty$ and $f\in L_{\lambda}^p(\RR)$,
$\|\tau_tf\|_{L^p_{\lambda}}\le 4\|f\|_{L^p_{\lambda}}$ with $t\in\RR$, and for $1\leq p<\infty$, $\lim_{t\rightarrow0}\|\tau_tf-f\|_{L^p_{\lambda}}=0$.

{\rm(iii)} \ If $f\in L^p_\lambda(\RR)$, $1\leq p\leq2$ and $t\in\RR$, then
$[\SF_{\lambda}(\tau_t
f)](\xi)=E_{\lambda}(it\xi)(\SF_{\lambda}f)(\xi)$ for almost every $\xi\in\RR$.

{\rm(iv)} \ For measurable $f,g$ on $\RR$, if $\iint
|f(z)||g(x)||W_{\lambda}(x,t,z)||z|^{2\lambda}|x|^{2\lambda}dzdx$ is convergent, we have
  $\langle\tau_tf,g\rangle_{\lambda}=\langle
f,\tau_{-t}g\rangle_{\lambda}$. In particular,
$\ast_{\lambda}$ is commutative.

{\rm(v)} \ {\rm(Young inequality)} \ If $p,q,r\in[1,\infty]$ and
$1/p+1/q=1+1/r$, then for  $f\in L^p_{\lambda}(\RR)$, $g\in
L^q_{\lambda}(\RR)$, we have $\|f\ast_{\lambda}g\|_{L^r_{\lambda}}\leq
4\|f\|_{L^p_{\lambda}} \|g\|_{L^q_{\lambda}}$.

{\rm(vi)} \ Assume that $p,q,r\in[1,2]$, and $1/p+1/q=1+1/r$. Then for $f\in L^p_{\lambda}(\RR)$ and $g\in
L^q_{\lambda}(\RR)$,
$[\SF_{\lambda}(f\ast_{\lambda}g)](\xi)=(\SF_{\lambda}f)(\xi)(\SF_{\lambda}g)(\xi)$. In particular,
$\ast_{\lambda}$ is associative in $L^1_{\lambda}(\RR)$

{\rm(vii)} \ If $f\in L^1_{\lambda}(\RR)$ and $g\in\SS(\RR)$, then
$f\ast_{\lambda}g\in C^{\infty}(\RR)$.

{\rm(viii)} \ If $f,g\in L_{\lambda,{\rm loc}}(\RR)$, ${\rm
supp}\,f\subseteq\{x:\, r_1\le|x|\le r_2\}$ and ${\rm
supp}\,g\subseteq\{x:\,|x|\le r_3\}$,$r_2>r_1>0$, $r_3>0$, then
${\rm supp}\, (f\ast_{\lambda}g)\subseteq\{x:\,r_1-r_3\le|x|\le
r_2+r_3\}$.
\end{proposition}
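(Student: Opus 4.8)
The plan is to verify the eight assertions (i)--(viii) one at a time, following \cite{ZhongKai Li 3}. The single genuinely analytic ingredient — and the main obstacle — is an $L^1$ bound for the $\lambda$-translation kernel: one needs $c_\lambda\int_\RR|W_\lambda(x,t,z)|\,|z|^{2\lambda}\,dz\le 4$, which, because $\tau_t$ fails to be a positive operator, cannot be obtained by integrating $W_\lambda$ itself but requires estimating $\int|W_\lambda|$ through the Beta-function computations of \cite{Ro1} (using the four-term splitting of $W_\lambda$, the normalization $c_\lambda\int_\RR W_\lambda^0(x,t,z)|z|^{2\lambda}\,dz=1$, which is the product formula~(\ref{product-D-1}) evaluated at $\xi=0$, and the bound $|\sigma_{\cdot,\cdot,\cdot}|\le1$ on the relevant support). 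Everything below then reduces to this bound, Fubini's theorem, and the product formula of Proposition~\ref{Dunkl-kernel-P}.

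\emph{Algebraic properties.} For (i), the identity $(\tau_tf)(x)=(\tau_xf)(t)$ is immediate from~(\ref{tau-1}) and the symmetry of $W_\lambda(x,t,z)$ under $x\leftrightarrow t$ (both $W_\lambda^0$ and the bracket $1-\sigma_{x,t,z}+\sigma_{z,x,t}+\sigma_{z,t,x}$ are symmetric in $x,t$), while $(\tau_t\tilde f)(x)=(\widetilde{\tau_{-t}f})(x)$ follows by tracking the effect of $x\mapsto-x$ and $t\mapsto-t$, using that $W_\lambda^0$ depends only on $|x|,|t|,|z|$ and that $\sigma_{x,t,z}$ is odd under the joint reflection $(x,t)\mapsto(-x,-t)$. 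Assertion (viii) is a pure support computation: the factor $\chi_{(||x|-|t||,\,|x|+|t|)}(|z|)$ in $W_\lambda^0$ localizes $\tau_xf$, hence $f\ast_\lambda g$ through~(\ref{convolution-10}), to the stated annulus.

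\emph{Boundedness, Young's inequality, smoothing.} Granting the kernel bound above, $\|\tau_tf\|_{L^p_\lambda}\le 4\|f\|_{L^p_\lambda}$ for $1\le p\le\infty$ is Minkowski's integral inequality, and $\lim_{t\to0}\|\tau_tf-f\|_{L^p_\lambda}=0$ for $p<\infty$ follows from this uniform bound, the density of $\SS(\RR)$ in $L^p_\lambda(\RR)$, and the representation~(\ref{tau-2}), which makes $\tau_tf\to f$ transparent for $f\in\SS(\RR)$; this is (ii). Property (v) is then standard: the endpoint pairs $(1,q)$ and $(p,1)$ come from (ii) and Minkowski's inequality, and the general exponents follow by Riesz--Thorin interpolation (or a direct H\"older splitting), keeping the constant $4$. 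For (vii), one differentiates $(f\ast_\lambda g)(x)$ under the integral sign in~(\ref{convolution-10}): when $g\in\SS(\RR)$, all $x$-derivatives fall on $g$ via~(\ref{tau-2}), dominated convergence applies, and hence $f\ast_\lambda g\in C^\infty(\RR)$.

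\emph{Fourier-side properties.} For (iii), insert~(\ref{tau-1}) into~(\ref{fourier}) and apply Fubini (justified for $f\in L^1_\lambda$); the resulting inner integral $c_\lambda\int_\RR W_\lambda(x,t,z)E_\lambda(-ix\xi)|x|^{2\lambda}\,dx$ equals $E_\lambda(it\xi)E_\lambda(-iz\xi)$ by the product formula~(\ref{product-D-1}) together with the symmetry properties of $W_\lambda$ recorded in its definition, whence $[\SF_\lambda(\tau_tf)](\xi)=E_\lambda(it\xi)(\SF_\lambda f)(\xi)$; the range $1\le p\le2$ is then obtained by density and Plancherel's theorem for $\SF_\lambda$. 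Property (vi) is the same computation carried out once more inside~(\ref{convolution-10}) using (iii), and associativity in $L^1_\lambda(\RR)$ follows at once on the transform side. Finally, for (iv), substituting~(\ref{tau-1}) and applying Fubini (the convergence hypothesis being exactly what legitimizes this) reduces $\langle\tau_tf,g\rangle_\lambda=\langle f,\tau_{-t}g\rangle_\lambda$ to a symmetry identity for $W_\lambda$ under a permutation of its arguments combined with the reflection $t\mapsto-t$ (the reflection accounting for the tilde built into $\tau_{-t}$ through~(\ref{tau-1})), and the commutativity of $\ast_\lambda$ then follows by combining (i) and (iv) in the definition~(\ref{convolution-10}).
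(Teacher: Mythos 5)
Your outline is essentially correct, but note that the paper itself contains no proof of this proposition: it is quoted verbatim from \cite{ZhongKai Li 3} (with the kernel facts going back to \cite{Ro1}), so there is nothing internal to compare against. Your sketch matches the standard route in that reference: the crux is indeed the uniform bound $c_\lambda\int_{\RR}|W_\lambda(x,t,z)|\,|z|^{2\lambda}dz\le 4$, from which (ii) and (v) follow by Minkowski, density, and interpolation, while (i), (iii), (iv), (vi), (viii) reduce to Fubini, the product formula~(\ref{product-D-1}), and the symmetries of $W_\lambda$. Two small points of care if you were to write this out in full: in the reflection identity of (i) the term $\sigma_{x,t,z}$ is \emph{even} under $(x,t)\mapsto(-x,-t)$ while $\sigma_{z,x,t}$ and $\sigma_{z,t,x}$ change sign, and it is the resulting bracket $1-\sigma_{x,t,z}-\sigma_{z,x,t}-\sigma_{z,t,x}$ that matches $W_\lambda(x,t,-z)$; and in (vii) you must first invoke commutativity (iv) to rewrite $f\ast_\lambda g$ as $c_\lambda\int(\tau_xg)(-t)f(t)|t|^{2\lambda}dt$ so that the $x$-derivatives genuinely land on the Schwartz factor via Proposition~\ref{D-translation-a}(i).
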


\begin{proposition}(\hbox{\cite{ZhongKai Li 3}, Corollary\,2.5(i),\,Lemma\,2.7,\,Proposition\,2.8(ii)})\label{D-translation-a}

{\rm (i)} \ If $f\in\SS(\RR)$ or $\SD(\RR)$, then for fixed
$t$, the function
$x\mapsto(\tau_tf)(x)$ is also in $\SS(\RR)$ or $\SD(\RR)$, and
\begin{eqnarray}\label{D-translation-10}
 D_t(\tau_tf(x))=D_x(\tau_tf(x))=[\tau_t(Df)](x).
\end{eqnarray}
If $f\in\SS(\RR)$ and $m>0$, a pointwise estimate is given by
\begin{eqnarray}\label{D-translation-2}
\big|(\tau_t|f|)(x)\big|\le
\frac{c_m(1+x^2+t^2)^{-\lambda}}{(1+||x|-|t||^2)^m}.\end{eqnarray}

{\rm (ii)} \ If $\phi\in L_\lambda^1(\RR)$ satisfies $(\SF_{\lambda}\phi)(0)=1$ and $\phi_{\epsilon}(x)=\epsilon^{-2\lambda-1}\phi(\epsilon^{-1}x)$ for
$\epsilon>0$, then for all $f\in X= L_{\lambda}^p(\RR)$, $ 1\leq p<\infty$, or $C_0(\RR)$, $\lim_{\epsilon\rightarrow0+}\|f\ast_{\lambda}\phi_{\epsilon}-f\|_X=0$.

\end{proposition}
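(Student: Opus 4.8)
The plan is to derive both assertions from the two explicit descriptions of the $\lambda$-translation — the signed-measure formula \eqref{tau-1} and the oscillatory-integral formula \eqref{tau-2} — together with the Fourier identity in Proposition \ref{tau-convolution-a}(iii). \textbf{Part (i).} Recall the standard facts that $\SF_\lambda$ is a topological isomorphism of $\SS(\RR)$ onto itself and that $\SF_\lambda(Df)(\xi)=i\xi\,(\SF_\lambda f)(\xi)$ (skew-symmetry of $D$ with respect to $|x|^{2\lambda}\,dx$, together with \eqref{D-kernel-2}). For $f\in\SS(\RR)$, Proposition \ref{tau-convolution-a}(iii) and $\SF_\lambda$-inversion give
\[
(\tau_t f)(x)=c_\lambda\int_\RR E_\lambda(ix\xi)\,E_\lambda(it\xi)\,(\SF_\lambda f)(\xi)\,|\xi|^{2\lambda}\,d\xi .
\]
Since $E_\lambda(it\,\cdot)$ is bounded together with all its $\xi$-derivatives (its Bessel components are), $\xi\mapsto E_\lambda(it\xi)(\SF_\lambda f)(\xi)$ is Schwartz, so $\tau_t f=\SF_\lambda^{-1}\big(E_\lambda(it\,\cdot)\,\SF_\lambda f\big)\in\SS(\RR)$. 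Differentiating the displayed integral under the integral sign — legitimate by the rapid decay of $\SF_\lambda f$ — and using $D_x[E_\lambda(ix\xi)]=i\xi E_\lambda(ix\xi)$ and $D_t[E_\lambda(it\xi)]=i\xi E_\lambda(it\xi)$ from \eqref{D-kernel-2}, both $D_x(\tau_t f)(x)$ and $D_t(\tau_t f)(x)$ equal $c_\lambda\int_\RR i\xi\,E_\lambda(ix\xi)E_\lambda(it\xi)(\SF_\lambda f)(\xi)|\xi|^{2\lambda}\,d\xi$; this is also the integral representation of $\tau_t(Df)$ (whose Dunkl transform is $i\xi E_\lambda(it\xi)\SF_\lambda f$), proving \eqref{D-translation-10}. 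For $f\in\SD(\RR)$ with ${\rm supp}\,f\subseteq\{|z|\le R\}$, the support claim follows from \eqref{tau-1} since $W_\lambda(x,t,z)=0$ unless $(|x|-|t|)^2\le z^2\le(|x|+|t|)^2$, so $(\tau_t f)(x)=0$ once $(|x|-|t|)^2>R^2$; smoothness in $x$ follows from \eqref{tau-2} by differentiation under the integral (the decomposition of $f$ into $f_e$ and $f_o$ there is exactly what removes the apparent singularity of $\langle x,t\rangle_\theta$ at $x=\mp t$).

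\textbf{The pointwise bound \eqref{D-translation-2}} is the step I expect to require the most care. Applying \eqref{tau-2} to the even function $|f|$ (the odd part drops out) gives $(\tau_t|f|)(x)=c'_\lambda\int_0^\pi |f|(\langle x,t\rangle_\theta)\,(1+\cos\theta)\sin^{2\lambda-1}\theta\,d\theta$, where $\langle x,t\rangle_\theta^2=x^2+t^2+2xt\cos\theta$ runs over $[(|x|-|t|)^2,(|x|+|t|)^2]$. Since $|f|(s)\le c_N(1+s^2)^{-N}$ for all $N$ and $\langle x,t\rangle_\theta^2\ge(|x|-|t|)^2$, the factor $(1+(|x|-|t|)^2)^{-m}$ is immediate. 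The extra decay $(1+x^2+t^2)^{-\lambda}$ is needed only when $|x|$ and $|t|$ are comparable and large; there $\langle x,t\rangle_\theta^2$ attains its minimum $(|x|-|t|)^2$ at an endpoint $\theta_0\in\{0,\pi\}$, near which $\langle x,t\rangle_\theta^2\approx(|x|-|t|)^2+|x||t|(\theta-\theta_0)^2$ while the weight $(1+\cos\theta)\sin^{2\lambda-1}\theta$ behaves like $|\theta-\theta_0|^{2\lambda+1}$ (if $\theta_0=\pi$) or $|\theta-\theta_0|^{2\lambda-1}$ (if $\theta_0=0$); the rescaling $v=\sqrt{|x||t|}\,(\theta-\theta_0)$ then contributes a factor $(|x||t|)^{-\lambda}\approx(1+x^2+t^2)^{-\lambda}$, while for $\theta$ bounded away from $\theta_0$ one already has $\langle x,t\rangle_\theta^2\gtrsim|x||t|$. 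Assembling these regimes (the case $\min(|x|,|t|)\lesssim1$ being trivial) yields \eqref{D-translation-2}; I anticipate that this case-by-case bookkeeping, not any conceptual difficulty, is the real work.

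\textbf{Part (ii).} This is the usual approximate-identity argument adapted to $\ast_\lambda$. The scaling $\phi_\epsilon(x)=\epsilon^{-2\lambda-1}\phi(\epsilon^{-1}x)$ gives $c_\lambda\int_\RR\phi_\epsilon(x)|x|^{2\lambda}dx=c_\lambda\int_\RR\phi(x)|x|^{2\lambda}dx=(\SF_\lambda\phi)(0)=1$ and $\|\phi_\epsilon\|_{L^1_\lambda}=\|\phi\|_{L^1_\lambda}$. For $f\in\SS(\RR)$, using the commutativity of $\tau$ from Proposition \ref{tau-convolution-a}(i), the substitution $t\mapsto\epsilon t$, and the normalization just noted,
\[
(f\ast_\lambda\phi_\epsilon)(x)-f(x)=c_\lambda\int_\RR\big[(\tau_{-\epsilon t}f)(x)-f(x)\big]\,\phi(t)\,|t|^{2\lambda}\,dt ,
\]
hence by Minkowski's integral inequality $\|f\ast_\lambda\phi_\epsilon-f\|_X\le c_\lambda\int_\RR\|\tau_{-\epsilon t}f-f\|_X\,|\phi(t)|\,|t|^{2\lambda}\,dt$. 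For $X=L^p_\lambda(\RR)$ with $1\le p<\infty$, the integrand tends to $0$ for each fixed $t$ by Proposition \ref{tau-convolution-a}(ii) and is dominated by the integrable function $5\|f\|_{L^p_\lambda}|\phi(t)||t|^{2\lambda}$, so dominated convergence closes the Schwartz case; for $X=C_0(\RR)$ one uses instead the uniform convergence $\tau_s f\to f$ as $s\to0$, which follows from \eqref{D-translation-2}. Finally, the uniform bound $\|f\ast_\lambda\phi_\epsilon\|_X\le4\|\phi\|_{L^1_\lambda}\|f\|_X$ (Young's inequality, Proposition \ref{tau-convolution-a}(v)), together with the density of $\SS(\RR)$ in $L^p_\lambda(\RR)$ for $1\le p<\infty$ and in $C_0(\RR)$, promotes the convergence to all of $X$ by a routine $3\varepsilon$-argument.
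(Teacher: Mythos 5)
The paper offers no proof of this proposition at all: it is imported verbatim from \cite{ZhongKai Li 3} (Corollary 2.5(i), Lemma 2.7, Proposition 2.8(ii)), so there is nothing internal to compare your argument against. Your reconstruction follows the standard route of that reference and is essentially sound: part (i) via the multiplier identity $\SF_\lambda(\tau_t f)=E_\lambda(it\cdot)\SF_\lambda f$ and the eigenfunction equation (\ref{D-kernel-2}), the support statement for $\SD(\RR)$ via the cutoff $\chi_{(||x|-|t||,|x|+|t|)}$ in $W_\lambda$, the pointwise bound via the endpoint analysis of $\langle x,t\rangle_\theta$ in (\ref{tau-2}), and part (ii) by the usual approximate-identity scheme (for $X=L^p_\lambda$ you do not even need the density step, since Proposition \ref{tau-convolution-a}(ii) already gives $\|\tau_s f-f\|_{L^p_\lambda}\to0$ for every $f\in L^p_\lambda$).

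One small inaccuracy to repair in the estimate (\ref{D-translation-2}): $|f|$ is not even in general, so the odd part does not ``drop out'' of (\ref{tau-2}). It is, however, harmless: since $|f|\ge0$ one has $\big||f|_o(s)\big|\le |f|_e(s)$, and the elementary identity $2\langle x,t\rangle_\theta^2-(x+t)^2(1+\cos\theta)=(x-t)^2(1-\cos\theta)\ge0$ gives $(1+\cos\theta)\,|x+t|/\langle x,t\rangle_\theta\le 2$, so the odd term is dominated by the even one up to a constant and your subsequent two-regime analysis (near the minimizing endpoint $\theta_0$, and away from it) goes through unchanged.
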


\subsection{Some facts about $H_{\lambda}^{p}(\RR^{2}_+)$ }\label{ce Hp3}

When u and v  satisfy the $\lambda$-Cauchy-Riemann equations:
\begin{eqnarray}\label{a c r0}
\left\{\begin{array}{ll}
                                    D_xu-\partial_y v=0,&  \\
                                    \partial_y u +D_xv=0&
                                 \end{array}\right.
\end{eqnarray}
the function F(z)=F(x,y)=u(x,y)+iv(x,y)\,(z=x+iy)\, is said to be a $\lambda$-analytic function on the upper half plane $\RR^2_+$. It is easy to see that
$F=u+iv$ is $\lambda$-analytic on $\RR^2_+$ if and only if
$$T_{\bar{z}}F\equiv0,\ \ \ \hbox{with}\,T_{\bar{z}}=\frac{1}{2}(D_x+i\partial_y).$$
If $u$ and $v$ are $C^2$ functions and satisfy\,(\ref{a c r0}), then
\begin{eqnarray}\label{1}
(\triangle_{\lambda}u)(x, y)=0,\ \ \hbox{with}\,\triangle_{\lambda}=D_x^2+ \partial_y^2.
\end{eqnarray}
A $C^2$ function $u(x, y)$ satisfying Formula\,(\ref{1}) is said to be $\lambda$-harmonic.\,In\,\cite{ZhongKai Li 3}, the
Hardy space $H^p_\lambda(\RR^2_+)$ for $p>0$ is defined to be the set of
$\lambda$-analytic functions on $\RR^2_+$ satisfying
$$\|F\|_{H^p_\lambda(\RR^2_+)}=\sup\limits_{y>0}\left\{c_{\lambda}\int_{\RR}|F(x+iy)|^p|x|^{2\lambda}dx \right\}^{1/p}<+\infty.$$
When $p>\frac{2\lambda}{2\lambda+1}$, some basic conclusions on $H_{\lambda}^p(\RR_+^2)$  are obtained in  \cite{ZhongKai Li 3}, together with
the associated real Hardy space $H_{\lambda}^p(\RR)$ on the line $\RR$, the collection  of the real parts of boundary functions   of  $F\in H_{\lambda}^p(\RR_+^2)$.

The following  results about  $\lambda$-Poisson integral and Conjugate $\lambda$-Poisson integral are obtained in \cite{ZhongKai Li 3}.
For $1\leq p\leq\infty$, the $\lambda$-Poisson integral of $f\in L_\lambda^p(\RR)$ in the Dunkl setting is given by:
$$
(Pf)(x,y)=c_{\lambda}\int_{\RR}f(t)(\tau_xP_y)(-t)|t|^{2\lambda}dt=c_{\lambda}(f\ast_{\lambda}
P_y)(x),
\ \ \ \ \ \ \hbox{for} \ x\in\RR, \ y\in(0,\infty),
$$
where $(\tau_xP_y)(-t)$ is the $\lambda$-Poisson kernel with $P_y(x)=m_{\lambda}y(y^2+x^2)^{-\lambda-1}$,
$m_\lambda=2^{\lambda+1/2}\Gamma(\lambda+1)/\sqrt{\pi}$. Similarly,  the $\lambda$-Poisson integral for  $d\mu\in {\frak
B}_{\lambda}(\RR)$ can be given by $
(P(d\mu))(x,y)=c_{\lambda}\int_{\RR}(\tau_xP_y)(-t)|t|^{2\lambda}d\mu(t). $

\begin{proposition}\cite{ZhongKai Li 3}\label{Poisson-a} {\rm(i)} \
$\lambda$-Poisson kernel $(\tau_xP_y)(-t)$ can be represented by
\begin{eqnarray}\label{D-Poisson-ker-11}
(\tau_xP_y)(-t)=
\frac{\lambda\Gamma(\lambda+1/2)}{2^{-\lambda-1/2}\pi}\int_0^\pi\frac{y(1+{\rm
sgn}(xt)\cos\theta)
}{\big(y^2+x^2+t^2-2|xt|\cos\theta\big)^{\lambda+1}}\sin^{2\lambda-1}\theta
d\theta.
\end{eqnarray}

{\rm(ii)} \
The Dunkl transform of the function $P_y(x)$ is $(\SF_{\lambda}P_y)(\xi)=e^{-y|\xi|}$, and
$
(\tau_xP_y)(-t)=c_{\lambda}\int_{\RR}e^{-y|\xi|}E_{\lambda}(ix\xi)E_{\lambda}(-it\xi)|\xi|^{2\lambda}d\xi.
$
\end{proposition}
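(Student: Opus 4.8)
The plan is to treat the two parts separately: part (i) I would derive from the explicit integral formula (\ref{tau-2}) for the $\lambda$-translation, using that $P_y$ is even; part (ii) I would obtain by first computing $\SF_\lambda P_y$ through subordination against the Dunkl Gaussian, and then feeding the result into the transform rule for translations, Proposition \ref{tau-convolution-a}(iii).

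\emph{Part (i).} Since $P_y(x)=m_\lambda y(y^2+x^2)^{-\lambda-1}$ is even, its odd part vanishes and $(P_y)_e=P_y$. Plugging $P_y$ into (\ref{tau-2}), with the two variables chosen so that $\tau_x$ acts on $P_y$ and the result is evaluated at $-t$, and using $\langle -t,x\rangle_\theta^2=x^2+t^2-2xt\cos\theta$, gives
\begin{equation*}
(\tau_xP_y)(-t)=c'_\lambda m_\lambda\, y\int_0^\pi\frac{1+\cos\theta}{(y^2+x^2+t^2-2xt\cos\theta)^{\lambda+1}}\,\sin^{2\lambda-1}\theta\,d\theta .
\end{equation*}
Writing $xt=\mathrm{sgn}(xt)|xt|$ and, in the case $xt<0$, substituting $\theta\mapsto\pi-\theta$ (which preserves $\sin^{2\lambda-1}\theta\,d\theta$ and flips the sign of $\cos\theta$) turns this into the asserted form; when $xt=0$ the two forms agree because $\int_0^\pi\cos\theta\,\sin^{2\lambda-1}\theta\,d\theta=0$. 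The constant checks out using $\Gamma(1/2)=\sqrt{\pi}$ and $\Gamma(\lambda+1)=\lambda\Gamma(\lambda)$, since $c'_\lambda m_\lambda=\frac{\Gamma(\lambda+1/2)}{\Gamma(\lambda)\Gamma(1/2)}\cdot\frac{2^{\lambda+1/2}\Gamma(\lambda+1)}{\sqrt{\pi}}=\frac{\lambda\,\Gamma(\lambda+1/2)\,2^{\lambda+1/2}}{\pi}=\frac{\lambda\,\Gamma(\lambda+1/2)}{2^{-\lambda-1/2}\pi}$.

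\emph{Part (ii).} First I would show $\SF_\lambda P_y=e^{-y|\cdot|}$. As $P_y\in L^1_\lambda(\RR)$, by Dunkl inversion this amounts to checking $P_y(x)=c_\lambda\int_\RR e^{-y|\xi|}E_\lambda(ix\xi)|\xi|^{2\lambda}d\xi$. Insert the subordination identity $e^{-y|\xi|}=\frac{y}{2\sqrt{\pi}}\int_0^\infty s^{-3/2}e^{-y^2/(4s)}e^{-s|\xi|^2}\,ds$ and interchange the integrations (legitimate: $|E_\lambda(ix\xi)|\le 1$ for real $x,\xi$ by (\ref{D-kernel-3}), the representing measure there being nonnegative of total mass $1$, and the Gaussian supplies absolute convergence). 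Using the Dunkl transform of the Gaussian, $c_\lambda\int_\RR e^{-s|\xi|^2}E_\lambda(ix\xi)|\xi|^{2\lambda}d\xi=(2s)^{-\lambda-1/2}e^{-|x|^2/(4s)}$, and then the substitution $u=1/(4s)$, the $s$-integral reduces to $\frac{y\,2^{\lambda+1/2}}{\sqrt{\pi}}\int_0^\infty u^{\lambda}e^{-(y^2+x^2)u}\,du=\frac{y\,2^{\lambda+1/2}\Gamma(\lambda+1)}{\sqrt{\pi}}(y^2+x^2)^{-\lambda-1}=P_y(x)$. With $\SF_\lambda P_y=e^{-y|\cdot|}$ established, Proposition \ref{tau-convolution-a}(iii) yields $\SF_\lambda(\tau_xP_y)(\xi)=E_\lambda(ix\xi)e^{-y|\xi|}$; since $\tau_xP_y\in L^1_\lambda$ (Proposition \ref{tau-convolution-a}(ii)) and its Dunkl transform is dominated by $e^{-y|\xi|}$, hence again in $L^1_\lambda$, inversion gives $(\tau_xP_y)(-t)=c_\lambda\int_\RR e^{-y|\xi|}E_\lambda(ix\xi)E_\lambda(-it\xi)|\xi|^{2\lambda}d\xi$.

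The main obstacle is the Gaussian computation inside (ii): it rests on the Dunkl transform of $e^{-s|\cdot|^2}$ being again a Gaussian, which is a standard but nontrivial fact of Dunkl analysis (provable from the power series of $j_\mu$ together with classical Gaussian--Bessel integrals, or by noting that $e^{-|\xi|^2/(4s)}$ solves the same second-order Dunkl equation), and on a careful Fubini step; the bound $|E_\lambda(ix\xi)|\le 1$ together with Gaussian decay handles the latter. Everything else in both parts is routine: a change of variables and Gamma-function bookkeeping in (i), and the substitution $u=1/(4s)$ with Euler's integral for $\Gamma$ in (ii).
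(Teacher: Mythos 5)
The paper does not prove this proposition at all: it is quoted verbatim from the cited reference [Li--Liao], so there is no in-paper argument to compare against. Your proof is correct and is essentially the standard derivation one would find in that reference. Part (i) is exactly right: $P_y$ is even, so only the $f_e$ term of (\ref{tau-2}) survives, the reflection $\theta\mapsto\pi-\theta$ handles the sign of $xt$, and your constant bookkeeping $c'_\lambda m_\lambda=\lambda\Gamma(\lambda+1/2)2^{\lambda+1/2}/\pi$ checks out. Part (ii) is also sound: the subordination identity, the scaling law $\SF_\lambda\big(e^{-s|\cdot|^2}\big)(x)=(2s)^{-\lambda-1/2}e^{-|x|^2/(4s)}$ (which follows from the self-reciprocity of $e^{-|\cdot|^2/2}$ under $\SF_\lambda$), and the substitution $u=1/(4s)$ do reproduce $m_\lambda y(y^2+x^2)^{-\lambda-1}$; the Fubini step is justified by $|E_\lambda(ix\xi)|\le1$, which you correctly extract from the positivity and total mass $1$ of the measure in (\ref{D-kernel-3}); and the final identity follows from Proposition \ref{tau-convolution-a}(iii) (applicable since $P_y\in L^1_\lambda\cap L^2_\lambda$) together with $L^1_\lambda$-inversion, whose hypotheses you verify. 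The only facts you lean on that are not in this paper --- the Dunkl inversion theorem and the Gaussian transform --- are standard and you flag them honestly, so the argument is complete modulo those.
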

Relating to $P_y(x)$, the conjugate $\lambda$-Poisson integral for $f\in L^p_{\lambda}(\RR)$ is given by:
$$
(Qf)(x,y)=c_{\lambda}\int_{\RR}f(t)(\tau_xQ_y)(-t)|t|^{2\lambda}dt=c_{\lambda}(f\ast_{\lambda}
Q_y)(x),
\ \ \ \ \ \ \hbox{for} \ x\in\RR, \ y\in(0,\infty),
$$
where $(\tau_xQ_y)(-t)$ is the conjugate $\lambda$-Poisson kernel with $Q_y(x)=m_{\lambda}x(y^2+x^2)^{-\lambda-1}$.

\begin{proposition}\cite{ZhongKai Li 3}\label{conjugate-Poisson-a} {\rm(i)} \ \
The conjugate  $\lambda$-Poisson kernel $(\tau_xQ_y)(-t)$ can be represented by
\begin{eqnarray*}\label{D-conjugate-Poisson-ker-1}
(\tau_xQ_y)(-t)=
\frac{\lambda\Gamma(\lambda+1/2)}{2^{-\lambda-1/2}\pi}\int_0^\pi\frac{(x-t)(1+{\rm
sgn}(xt)\cos\theta)
}{\big(y^2+x^2+t^2-2|xt|\cos\theta\big)^{\lambda+1}}\sin^{2\lambda-1}\theta
d\theta.
\end{eqnarray*}

{\rm(ii)} \  The Dunkl transform of $Q_y(x)$ is given by  $(\SF_{\lambda}Q_y)(\xi)=-i({\rm
sgn}\,\xi)e^{-y|\xi|}$, for $\xi\neq0$, and
$
(\tau_xQ_y)(-t)=-ic_{\lambda}\int_{\RR}({\rm
sgn}\,\xi)e^{-y|\xi|}E_{\lambda}(ix\xi)E_{\lambda}(-it\xi)|\xi|^{2\lambda}d\xi.
$
\end{proposition}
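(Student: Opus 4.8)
The plan is to read off both assertions directly from the explicit formulas $P_y(x)=m_\lambda y(y^2+x^2)^{-\lambda-1}$ and $Q_y(x)=m_\lambda x(y^2+x^2)^{-\lambda-1}$, using only the machinery already assembled in Subsection \ref{ce Hp3}: the integral form \,(\ref{tau-2}) of the $\lambda$-translation, the symmetry and Fourier properties of $\tau$ and $\ast_\lambda$ in Proposition \ref{tau-convolution-a}, and the known value $(\SF_\lambda P_y)(\xi)=e^{-y|\xi|}$ from Proposition \ref{Poisson-a}(ii).

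For (i) the key point is that $Q_y$ is an \emph{odd} function, so $(Q_y)_e\equiv 0$ and $(Q_y)_o(r)=m_\lambda r(y^2+r^2)^{-\lambda-1}$. Writing $(\tau_xQ_y)(-t)=(\tau_{-t}Q_y)(x)$ by Proposition \ref{tau-convolution-a}(i) and applying \,(\ref{tau-2}) with $t$ replaced by $-t$, so that $\langle x,-t\rangle_\theta=\sqrt{x^2+t^2-2xt\cos\theta}$, only the $f_o$-term survives:
$$(\tau_xQ_y)(-t)=c'_\lambda\int_0^\pi (Q_y)_o\!\big(\langle x,-t\rangle_\theta\big)\frac{x-t}{\langle x,-t\rangle_\theta}(1+\cos\theta)\sin^{2\lambda-1}\theta\,d\theta .$$
Inserting the formula for $(Q_y)_o$ cancels the denominator $\langle x,-t\rangle_\theta$ and produces $m_\lambda(x-t)(y^2+x^2+t^2-2xt\cos\theta)^{-\lambda-1}$ inside the integral. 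The remaining step is a sign normalization: when $xt<0$ the substitution $\theta\mapsto\pi-\theta$ turns $1+\cos\theta$ into $1-\cos\theta$ and $-2xt\cos\theta$ into $-2|xt|\cos\theta$, so in both cases the integrand carries the factor $1+{\rm sgn}(xt)\cos\theta$ over $(y^2+x^2+t^2-2|xt|\cos\theta)^{\lambda+1}$; and $c'_\lambda m_\lambda=\lambda\Gamma(\lambda+1/2)/(2^{-\lambda-1/2}\pi)$ by a one-line Gamma-function identity. This is exactly the stated representation.

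For (ii) I would first record the $\lambda$-Cauchy--Riemann relations between the two kernels, $D_xP_y=\partial_yQ_y$ and $\partial_yP_y=-D_xQ_y$, which drop out of differentiating the rational expressions, using that $P_y$ is even (its reflection term vanishes, so $D_xP_y=P_y'$) and $Q_y$ is odd (so $D_xQ_y=Q_y'+2\lambda x^{-1}Q_y$). Applying $\SF_\lambda$ in $x$ to the second relation and using $\SF_\lambda(D_xg)(\xi)=i\xi(\SF_\lambda g)(\xi)$ together with $(\SF_\lambda P_y)(\xi)=e^{-y|\xi|}$ gives
$$-|\xi|e^{-y|\xi|}=\partial_y e^{-y|\xi|}=\SF_\lambda(\partial_yP_y)(\xi)=-\SF_\lambda(D_xQ_y)(\xi)=-i\xi(\SF_\lambda Q_y)(\xi),$$
hence $(\SF_\lambda Q_y)(\xi)=-i({\rm sgn}\,\xi)e^{-y|\xi|}$ for $\xi\neq 0$. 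The stated integral formula for $(\tau_xQ_y)(-t)$ then follows from Dunkl inversion combined with $\SF_\lambda(\tau_xg)(\xi)=E_\lambda(ix\xi)(\SF_\lambda g)(\xi)$ (Proposition \ref{tau-convolution-a}(iii)), i.e.\ $(\tau_xQ_y)(-t)=c_\lambda\int_\RR(\SF_\lambda Q_y)(\xi)E_\lambda(ix\xi)E_\lambda(-it\xi)|\xi|^{2\lambda}d\xi$, into which one substitutes the value just found.

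The step I expect to require the most care is the Fourier-side argument in (ii): $Q_y\notin L^1_\lambda(\RR)$ — its $\lambda$-weighted integral diverges logarithmically at infinity — although $Q_y\in L^2_\lambda(\RR)$, so $\SF_\lambda Q_y$, the interchange of $\partial_y$ with $\SF_\lambda$ applied to $P_y$, and the inversion step all have to be read in the $L^2_\lambda$ (or tempered-distribution) sense rather than as honest absolutely convergent $L^1$-integrals. A clean way to sidestep this is to compute $\SF_\lambda Q_y$ directly by feeding the representation \,(\ref{D-kernel-3}) of $E_\lambda$ into the defining integral and reducing to a classical one-dimensional Fourier computation, or to differentiate the regularized kernel $Q_{y+\epsilon}$ and let $\epsilon\downarrow0$. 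Part (i), by contrast, is purely a bookkeeping matter once the parity of $Q_y$ is exploited.
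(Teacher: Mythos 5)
This proposition is quoted from \cite{ZhongKai Li 3} and the paper supplies no proof of it, so there is nothing internal to compare your argument against; what you have written is a self-contained derivation, and it is correct. For (i) your reduction is exactly the natural one: the oddness of $Q_y$ kills the $f_e$-term in \,(\ref{tau-2}), the factor $\langle x,-t\rangle_\theta$ in the denominator cancels against $(Q_y)_o(\langle x,-t\rangle_\theta)$ (which also disposes of the apparent singularity at $\langle x,-t\rangle_\theta=0$), the substitution $\theta\mapsto\pi-\theta$ handles the case $xt<0$, and the constant check $c'_\lambda m_\lambda=\Gamma(\lambda+1/2)\,2^{\lambda+1/2}\lambda/\pi$ is right. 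For (ii) your route through the $\lambda$-Cauchy--Riemann relations $\partial_yP_y=-D_xQ_y$ together with $\SF_\lambda(D_xg)(\xi)=i\xi(\SF_\lambda g)(\xi)$ and $(\SF_\lambda P_y)(\xi)=e^{-y|\xi|}$ gives the correct value $-i(\mathrm{sgn}\,\xi)e^{-y|\xi|}$ (I checked the relation $\partial_yP_y+D_xQ_y=0$ directly from the rational expressions; it holds). You are also right to flag the one genuinely delicate point, namely that $Q_y\notin L^1_\lambda(\RR)$ because $|Q_y(x)||x|^{2\lambda}\sim|x|^{-1}$ at infinity, so the transform, the interchange of $\partial_y$ with $\SF_\lambda$, and the inversion must be read in $L^2_\lambda(\RR)$ (where $Q_y$ does live, and where Proposition\,\ref{tau-convolution-a}(iii) still applies with $p=2$); either of your proposed workarounds (regularizing via $Q_{y+\epsilon}$, or computing $\SF_\lambda Q_y$ directly through \,(\ref{D-kernel-3})) closes this cleanly. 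In short: the paper cites, you prove, and your proof stands.
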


Then we can define the associated maximal functions as
$$(P^*_{\nabla}f)(x)=\sup_{|s-x|<y}|(Pf)(s,y)|,\ \ \
(P^*f)(x)=\sup_{y>0}|(Pf)(x,y)|,$$
$$(Q^*_{\nabla}f)(x)=\sup_{|s-x|<y}|(Qf)(s,y)|,\ \ \ (Q^*f)(x)=\sup_{y>0}|(Qf)(x,y)|.$$

\begin{proposition}\cite{ZhongKai Li 3}\label{Poisson-conjugate-CR}
{\rm (i)}    For $f\in L^p_{\lambda}(\RR)$, $1\le p<\infty$,  $u(x,y)=(Pf)(x,y)$ and  $v(x,y)=(Qf)(x,y)$
on $\RR^2_+$ satisfy the $\lambda$-Cauchy-Riemann equations\,(\ref{a c r0}), and are both $\lambda$-harmonic on $\RR^2_+$.\\
{\rm (ii)}\rm(semi-group property)  If $f\in L^p_{\lambda}(\RR)$, $1\leq p\leq \infty$, and $y_0>0$, then
$(Pf)(x,y_0+y)=P[(Pf)(\cdot,y_0)](x,y), \ \hbox{for} \ y>0.$\\
{\rm (iii)}  If $1 < p < \infty$, then there exists some constant $A_p'$ for any $f \in
L^p_{\lambda}(\RR)$, $\|(Q^*_{\nabla})f\|_{L^p_{\lambda}} \le A_p'
\|f\|_{L^p_{\lambda}}$.\\
{\rm (iv)}   $P^*_{\nabla}$ and $P^*$ are both $(p,p)$ type for $1<p
\leq\infty$ and weak-$(1,1)$ type.\\
{\rm (v)}    If $d\mu\in {\frak B}_{\lambda}(\RR)$, then
$\|(P(d\mu))(\cdot,y)\|_{L_{\lambda}^1}\le\|d\mu\|_{{\frak
B}_{\lambda}}$ as $y\rightarrow0+$, $[P(d\mu)](\cdot,y)$
converges $*$-weakly to $d\mu$: If $f\in X=L_{\lambda}^p(\RR)$, $1\le
p<\infty$, or $C_0(\RR)$, then $\|(Pf)(\cdot,y)\|_X\le\|f\|_X$ and
$\lim_{y\rightarrow0+}\|(Pf)(\cdot,y)-f\|_{X}=0$.\\
{\rm (vi)}  If $f\in L^p_{\lambda}(\RR)$, $1\le p\le2$, and
$[\SF_{\lambda}(Pf(\cdot,y))](\xi)=e^{-y|\xi|}(\SF_{\lambda}f)(\xi)$,
and
\begin{eqnarray}\label{D-Poisson-2}
(Pf)(x,y)=c_{\lambda}\int_{\RR}e^{-y|\xi|}(\SF_{\lambda}f)(\xi)E_{\lambda}(ix\xi)|\xi|^{2\lambda}d\xi,\quad
(x,y)\in\RR^2_+,
\end{eqnarray}
furthermore, Formula\,(\ref{D-Poisson-2}) is true when we replace $f\in L^p_{\lambda}(\RR)$ with $d\mu\in {\frak
B}_{\lambda}(\RR)$.\\
{\rm (vii)} \    If $1\le p<\infty$ and  $F=u+iv\in H_{\lambda}^p(\RR^2_+)$, then $F$
is the $\lambda$-Poisson integral of its boundary values $F(x)$, and $F(x)\in
L^p_{\lambda}(\RR)$.\\
\end{proposition}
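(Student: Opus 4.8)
All seven items are the Dunkl analogues of standard facts about Hardy spaces on a half-plane, and they are proved in \cite{ZhongKai Li 3}; I would organize a proof around four blocks --- the transform-side identities (i), (ii), (vi); the nontangential maximal bound (iv); the approximate-identity statement (v); the conjugate maximal bound (iii); and the structure theorem (vii) --- proving the routine blocks first and isolating (iii) and (vii) as the genuine work.

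For the transform side I would start with (vi): for $1\le p\le 2$ the Dunkl transform sends $L^p_\lambda(\RR)$ into $L^{p'}_\lambda(\RR)$ (Hausdorff--Young in the Dunkl setting), so by Proposition~\ref{tau-convolution-a}(vi) and $(\SF_\lambda P_y)(\xi)=e^{-y|\xi|}$ (Proposition~\ref{Poisson-a}(ii)) one gets $\SF_\lambda[(Pf)(\cdot,y)](\xi)=e^{-y|\xi|}(\SF_\lambda f)(\xi)$, and Dunkl inversion gives \eqref{D-Poisson-2}; the case $d\mu\in{\frak B}_\lambda$ follows by approximating $d\mu$ $\ast$-weakly by its own $\lambda$-Poisson integrals. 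From \eqref{D-Poisson-2}, assertion (i) is differentiation under the integral sign: using $D_x[E_\lambda(ix\xi)]=i\xi E_\lambda(ix\xi)$ from \eqref{D-kernel-2} and $\partial_y e^{-y|\xi|}=-|\xi|e^{-y|\xi|}$ one checks $D_x(Pf)=\partial_y(Qf)$ and $\partial_y(Pf)=-D_x(Qf)$, so $(Pf,Qf)$ solves the $\lambda$-Cauchy--Riemann equations\,(\ref{a c r0}), and $\lambda$-harmonicity follows at once from $T_{\bar z}F\equiv 0$; for $p>2$, where $\SF_\lambda f$ need not be a function, I would differentiate instead the explicit kernels of Proposition~\ref{Poisson-a}(i) and Proposition~\ref{conjugate-Poisson-a}(i), using the decay estimate \eqref{D-translation-2} to move $D_x,\partial_y$ inside $f\ast_\lambda(\cdot)$. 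Finally (ii) drops out once $P_{y_0}\ast_\lambda P_y=P_{y_0+y}$ (same Dunkl transform $e^{-(y_0+y)|\xi|}$) is combined with associativity of $\ast_\lambda$.

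The maximal and approximate-identity block I would run in the space of homogeneous type $(\RR,|x|^{2\lambda}dx)$. From the representation in Proposition~\ref{Poisson-a}(i) together with \eqref{D-translation-2} one gets that $(\tau_x P_y)(-t)$ is, uniformly in $y>0$, dominated by a normalized bump relative to $|t|^{2\lambda}dt$, whence $P^*_\nabla f\lesssim M_\lambda f$ for the Hardy--Littlewood maximal operator $M_\lambda$ of this doubling space; since $M_\lambda$ is $(p,p)$ for $p>1$ and weak $(1,1)$, this gives (iv). For (v): $P_y(x)=y^{-2\lambda-1}P_1(x/y)$ with $P_1\in L^1_\lambda$ and $(\SF_\lambda P_1)(0)=1$, so $\{P_y\}$ is an approximate identity and Proposition~\ref{D-translation-a}(ii) yields $X$-norm convergence; $\|(Pf)(\cdot,y)\|_X\le\|f\|_X$ is Young's inequality (Proposition~\ref{tau-convolution-a}(v)) since $\|P_y\|_{L^1_\lambda}$ is constant, and the $\ast$-weak convergence of $P(d\mu)$ is obtained by testing against $C_0(\RR)$. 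Assertion (iii) is the first real obstacle: I would introduce the Dunkl--Riesz (Hilbert) transform $H_\lambda$, show $(Qf)(\cdot,y)=(P(H_\lambda f))(\cdot,y)$ on a dense class so that $Q^*_\nabla f\lesssim M_\lambda(H_\lambda f)$, and then invoke the $L^p_\lambda$ boundedness of $H_\lambda$ for $1<p<\infty$ --- which itself needs Calder\'on--Zygmund theory for the Dunkl convolution, the subtlety being that $\tau_t$ is not a positive operator.

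The structure theorem (vii) is where I expect the main difficulty. The plan is to show that for a $\lambda$-analytic $F=u+iv$ the function $|F|^q$ is $\lambda$-subharmonic for every $q\ge \frac{2\lambda}{2\lambda+1}$; combined with $F\in H^p_\lambda(\RR^2_+)$ (so $p\ge q$) this forces $\|F(\cdot,y+y_0)\|_{H^p_\lambda}$ to decrease in $y$ and $|F(\cdot,y+y_0)|^{q}$ to be dominated by the $\lambda$-Poisson integral of $|F(\cdot,y_0)|^{q}$. The family $\{u(\cdot,y)\}_{y>0}$ is then bounded in $L^p_\lambda(\RR)$, so for $p>1$ a weakly convergent subsequence gives a boundary function $g\in L^p_\lambda$, and the semigroup property (ii) together with uniqueness of $\lambda$-harmonic functions with prescribed boundary data identifies $u=P(g)$, $v=Q(g)$, hence $F=P(F(\cdot))$ with $F(\cdot)\in L^p_\lambda$ (for $p=1$ one first extracts a boundary measure and then uses $\lambda$-analyticity to rule out a singular part). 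The hard part is the subharmonicity of $|F|^q$: the nonlocal reflection term $\frac{\lambda}{x}[f(x)-f(-x)]$ in $D_x$ obstructs the classical pointwise inequality $\triangle(|F|^q)\ge 0$, so one must argue through the system $(u,v)$ --- controlling $D_x u$ and $D_x v$ via \,(\ref{a c r0}) --- or through the $\lambda$-Poisson representation on small half-planes, and pushing the admissible exponent down to $\frac{2\lambda}{2\lambda+1}$, the value that governs the whole theory, is exactly where the ``dimensional'' constant $2\lambda+1$ must be used.
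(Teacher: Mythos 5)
This proposition is quoted in the paper directly from \cite{ZhongKai Li 3}: the paper supplies no proof of its own for any of the seven items, so there is no internal argument to compare yours against. Judged on its own terms, your outline is the standard one and is essentially the route taken in the cited reference (the Stein--Weiss half-plane theory transplanted to the Dunkl setting): (vi) via Hausdorff--Young, the convolution theorem and $(\SF_\lambda P_y)(\xi)=e^{-y|\xi|}$; (i) by differentiating under the integral (your transform-side check that $D_xu=\partial_yv$ and $\partial_yu=-D_xv$ is correct); (ii) from $P_{y_0}\ast_\lambda P_y=P_{y_0+y}$; (iv)--(v) by domination of $(\tau_xP_y)(-t)$ by the Hardy--Littlewood maximal function of the doubling space $(\RR,|x|^{2\lambda}dx)$ and the approximate-identity lemma. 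You also correctly locate the two places where genuine work is needed and cannot be waved through: the $L^p_\lambda$ boundedness of the Dunkl-conjugate (Hilbert-type) operator underlying (iii), and, for (vii), the ``subharmonicity'' of $|F|^q$ down to $q=\frac{2\lambda}{2\lambda+1}$, which in this setting only holds in a mean/Poisson-majorization sense because of the nonlocal reflection term in $D_x$; both are substantial theorems of \cite{ZhongKai Li 3} rather than routine steps, so your sketch is an accurate map of the proof rather than a complete one --- which is appropriate for a result the paper itself imports by citation.
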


\begin{lemma}\cite{ZhongKai Li 3}\label{Hardy-thm-0} Let $F\in H^p_{\lambda}(\RR^2_+)$, $\frac{2\lambda}{2\lambda+1}\le p\le1$.
Then  there exists a function $\phi$ on $\RR$, satisfying

{\rm(i)} \  For all $(x,y)\in\RR^2_+$,
\begin{eqnarray}\label{Hp-represent-1}
F(x,y)=c_{\lambda}\int_0^{\infty}e^{-y|\xi|}\phi(\xi)E_{\lambda}(ix\xi)|\xi|^{2\lambda}d\xi;
\end{eqnarray}

{\rm(ii)} \ $\phi$ is continuous on $\RR$, and $\phi(\xi)=0$ for
$\xi\in(-\infty,0]$;

{\rm(iii)} \ For $y>0$, the function $\xi\mapsto
e^{-y|\xi|}\phi(\xi)$ is bounded on $\RR$ satisfying $\xi\mapsto
e^{-y|\xi|}\phi(\xi)\in L_{\lambda}^1(\RR)$;

{\rm(iv)} \ Obviously, the function $\phi$ satisfying the following inequality:
$$\sup_{y>0}c_{\lambda}\int_0^{\infty}\left|\int_0^{\infty}e^{-y|\xi|}\phi(\xi)E_{\lambda}(ix\xi)|\xi|^{2\lambda}d\xi\right|^p|x|^{2\lambda} dx<\infty.$$

Thus we could deduce that for $\frac{2\lambda}{2\lambda+1}\le p\le1$, $F\in H^p_{\lambda}(\RR^2_+)$ if and only if
there exits a $\phi$ satisfying {\rm(i)},\, {\rm(ii)},\, {\rm(iii)} ,\,{\rm(iv)}.

\end{lemma}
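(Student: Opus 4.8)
The only implication that requires an argument is ``$F\in H^p_\lambda(\RR^2_+)\Rightarrow$ (i)--(iv)''; the converse will come out of differentiating \eqref{Hp-represent-1} under the integral sign. My plan is to regularise $F$ by a vertical shift so as to land in the space $H^2_\lambda(\RR^2_+)$, where the Dunkl-transform description of Poisson integrals in Proposition \ref{Poisson-conjugate-CR} is available; to read off from the $\lambda$-Cauchy--Riemann equations that the resulting $\lambda$-spectrum is carried by $[0,\infty)$; and then to let the regularisation parameter tend to $0$, checking that the spectral density so produced is independent of it.

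First I would record the two facts from the $H^p_\lambda$-theory of \cite{ZhongKai Li 3} that do the work: since $p>\tfrac{2\lambda}{2\lambda+1}$, the function $|F|^p$ is $\lambda$-subharmonic, which gives on one hand the pointwise bound $|F(x,y)|\lesssim_{\lambda,p}y^{-(2\lambda+1)/p}\|F\|_{H^p_\lambda(\RR^2_+)}$, and on the other hand that $|F|^p$ has a least $\lambda$-harmonic majorant $P(d\mu)$ with $d\mu$ a finite measure; because the $\lambda$-Poisson kernel decays like $(\tau_xP_y)(-t)\lesssim y\,(y+\big||x|-|t|\big|)^{-2\lambda-2}$ and $2\lambda-\tfrac{2\lambda+2}{p}<-1$ whenever $p\le 1$, this majorant is integrable against $|x|^{2\lambda}dx$ for each fixed $y$, so $F(\cdot,y)\in L^1_\lambda(\RR)$ for every $y>0$. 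Now fix $\epsilon>0$ and put $F^{(\epsilon)}(x,y):=F(x,y+\epsilon)$. The pointwise bound makes $F^{(\epsilon)}$ bounded on $\RR^2_+$ by some $M_\epsilon$, whence $\|F(\cdot,y+\epsilon)\|_{L^2_\lambda}^2\le M_\epsilon^{\,2-p}c_\lambda\int_\RR|F(x,y+\epsilon)|^p|x|^{2\lambda}dx\le M_\epsilon^{\,2-p}\|F\|_{H^p_\lambda}^p$ uniformly in $y$; since the system \eqref{a c r0} is invariant under vertical translation $F^{(\epsilon)}$ is $\lambda$-analytic, so $F^{(\epsilon)}\in H^2_\lambda(\RR^2_+)$.

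By Proposition \ref{Poisson-conjugate-CR}(vii) and (vi), $F^{(\epsilon)}$ is the $\lambda$-Poisson integral of $F(\cdot,\epsilon)\in L^2_\lambda(\RR)$ and
$$F(x,y+\epsilon)=c_\lambda\int_{\RR}e^{-y|\xi|}\Phi_\epsilon(\xi)E_\lambda(ix\xi)|\xi|^{2\lambda}d\xi,\qquad \Phi_\epsilon:=\SF_\lambda\big(F(\cdot,\epsilon)\big).$$
Differentiating this integral in $x$ and $y$ (legitimate since $\xi^{j}e^{-y|\xi|}\Phi_\epsilon(\xi)|\xi|^{2\lambda}\in L^1$ by Cauchy--Schwarz) and using $D_x[E_\lambda(ix\xi)]=i\xi E_\lambda(ix\xi)$ from \eqref{D-kernel-2}, the $\lambda$-analyticity $T_{\bar z}F^{(\epsilon)}=\tfrac12(D_x+i\partial_y)F^{(\epsilon)}\equiv0$ and injectivity of the Dunkl transform force $(\xi-|\xi|)e^{-y|\xi|}\Phi_\epsilon(\xi)\equiv0$, i.e. $\Phi_\epsilon(\xi)=0$ for $\xi<0$; as $\Phi_\epsilon=\SF_\lambda(F(\cdot,\epsilon))$ is continuous ($F(\cdot,\epsilon)\in L^1_\lambda$) it vanishes on $(-\infty,0]$. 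Writing $\phi_\epsilon(\xi):=e^{\epsilon|\xi|}\Phi_\epsilon(\xi)$ and replacing $y+\epsilon$ by $s$ gives $F(x,s)=c_\lambda\int_0^\infty e^{-s\xi}\phi_\epsilon(\xi)E_\lambda(ix\xi)\xi^{2\lambda}d\xi$ for all $s>\epsilon$; comparing the representations for $0<\epsilon_1<\epsilon_2$ on $s>\epsilon_2$ and invoking injectivity of $\SF_\lambda$ again yields $\phi_{\epsilon_1}=\phi_{\epsilon_2}$, so there is a single continuous $\phi$, vanishing on $(-\infty,0]$, with $F(x,y)=c_\lambda\int_0^\infty e^{-y\xi}\phi(\xi)E_\lambda(ix\xi)\xi^{2\lambda}d\xi$ for all $y>0$; this is (i), (ii). Property (iii) follows from $\phi(\xi)=e^{\epsilon|\xi|}\SF_\lambda(F(\cdot,\epsilon))(\xi)$: for $y>\epsilon$, $|e^{-y|\xi|}\phi(\xi)|\le\|F(\cdot,\epsilon)\|_{L^1_\lambda}\,e^{-(y-\epsilon)|\xi|}$, which is bounded and lies in $L^1_\lambda(\RR)$. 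Property (iv) is immediate from $F\in H^p_\lambda(\RR^2_+)$. For the converse, given $\phi$ with (i)--(iv), condition (iii) at heights $y$ and $y/2$ licenses differentiating \eqref{Hp-represent-1} under the integral, and since $\phi$ is supported in $[0,\infty)$ one gets $T_{\bar z}F=\tfrac12 c_\lambda\int_0^\infty(i\xi-i\xi)e^{-y\xi}\phi(\xi)E_\lambda(ix\xi)\xi^{2\lambda}d\xi=0$, so $F$ is $\lambda$-analytic; together with (iv) this gives $F\in H^p_\lambda(\RR^2_+)$.

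I expect the load-bearing step to be the pointwise majorant for $F$ in the second paragraph: it is the one place where $p>\tfrac{2\lambda}{2\lambda+1}$ is genuinely used (through $\lambda$-subharmonicity of $|F|^p$), and it is precisely what allows one to replace the possibly very singular boundary behaviour of a low-exponent Hardy function by the tame $L^2_\lambda$ boundary data of the shifted function $F^{(\epsilon)}$; the complementary inequality $p<\tfrac{2\lambda+2}{2\lambda+1}$ (automatic for $p\le1$) is what makes $F(\cdot,y)\in L^1_\lambda$, hence what lets us take $\SF_\lambda$ of the slices and get a continuous $\phi$. Everything after that — locating the support of $\phi$ via \eqref{D-kernel-2}, the $\epsilon$-independence via injectivity of $\SF_\lambda$, and the converse — is soft.
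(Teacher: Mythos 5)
Your proposal is essentially correct, but it does considerably more than the paper does, and the two treatments diverge in an important way. The paper's ``proof'' of Lemma \ref{Hardy-thm-0} consists only of the observation that $T_{\bar z}$ annihilates the integral $c_\lambda\int_0^\infty e^{-y|\xi|}\phi(\xi)E_\lambda(ix\xi)|\xi|^{2\lambda}d\xi$, i.e.\ it verifies the $\lambda$-analyticity half of the ``if'' direction and leaves the entire forward implication ($F\in H^p_\lambda\Rightarrow$ existence of $\phi$ with (i)--(iv)) to the citation of \cite{ZhongKai Li 3}; your last paragraph's converse argument coincides with what the paper actually writes. What you add is a self-contained reconstruction of the hard direction: the vertical shift $F^{(\epsilon)}(x,y)=F(x,y+\epsilon)$, the pointwise majorant coming from $\lambda$-subharmonicity of $|F|^p$ for $p>\frac{2\lambda}{2\lambda+1}$, the resulting membership $F^{(\epsilon)}\in H^2_\lambda(\RR^2_+)$, the identification of $\Phi_\epsilon=\SF_\lambda(F(\cdot,\epsilon))$ via Proposition \ref{Poisson-conjugate-CR}, the localisation of its support in $[0,\infty)$ from $T_{\bar z}F^{(\epsilon)}=0$ together with $D_x[E_\lambda(ix\xi)]=i\xi E_\lambda(ix\xi)$ and injectivity of $\SF_\lambda$, and the $\epsilon$-independence of $\phi_\epsilon=e^{\epsilon|\xi|}\Phi_\epsilon$. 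This is exactly the classical Paley--Wiener-type argument transplanted to the Dunkl setting and is, as far as I can tell, sound; the only inputs you use that are not stated in this paper (the $\lambda$-subharmonicity of $|F|^p$ in the range $p>\frac{2\lambda}{2\lambda+1}$, the sub-mean-value estimate, and the least $\lambda$-harmonic majorant) are precisely the results of \cite{ZhongKai Li 3} that the lemma is citing, so you should flag them explicitly as imported rather than proved. Your approach buys a genuinely self-contained proof and makes visible where the threshold $\frac{2\lambda}{2\lambda+1}$ enters; the paper's approach buys brevity at the cost of proving only the direction that is not actually used later (Proposition \ref{cesa4} needs the forward direction to produce $\phi$ from $F$, and the converse to conclude $C_\alpha F\in H^p_\lambda$, so both halves matter).
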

\begin{proof}
Notice that for the operator $T_{\bar{z}}=\frac{1}{2}(D_x+i\partial_y)$, with $\phi(\xi)$ satisfying {\rm(i)},\, {\rm(ii)},\, {\rm(iii)} ,\,{\rm(iv)},\,we have
\begin{eqnarray*}
T_{\bar{z}}c_{\lambda}\int_0^{\infty}e^{-y|\xi|}\phi(\xi)E_{\lambda}(ix\xi)|\xi|^{2\lambda}d\xi=c_{\lambda}\int_0^{\infty}\frac{1}{2}\big(-i\xi+i\xi\big)e^{-y|\xi|}\phi(\xi)E_{\lambda}(ix\xi)|\xi|^{2\lambda}d\xi=0.
\end{eqnarray*}
Thus  the function $c_{\lambda}\int_0^{\infty}e^{-y|\xi|}\phi(\xi)E_{\lambda}(ix\xi)|\xi|^{2\lambda}d\xi$ is $\lambda$-analytic on the upper half plane $\RR^2_+$.

\end{proof}

\begin{lemma}\label{estimation}

For $y>0$, $x,\,z\in\RR$, we have the following estimations for the $\lambda$-Poisson kernel $(\tau_xP_y)(-z)$ and the conjugate  $\lambda$-Poisson kernel $(\tau_xQ_y)(-z)$
\begin{eqnarray}\label{D-Poisson-ker-110}
\left|\partial_x(\tau_xP_y)(-z)\right|\lesssim\bigg(\frac{1}{y^2+(|x|-|z|)^2}\bigg)^{\lambda+1},\ \ \ \ \left|\partial_y(\tau_xP_y)(-z)\right|\lesssim\bigg(\frac{1}{y^2+(|x|-|z|)^2}\bigg)^{\lambda+1},
\end{eqnarray}
\begin{eqnarray}\label{D-Poisson-ker-112}
\left|\partial_y\partial_y(\tau_xP_y)(-z)\right|\lesssim\bigg(\frac{1}{y^2+(|x|-|z|)^2}\bigg)^{\lambda+\frac{3}{2}},\ \ \ \left|\partial_x\partial_x(\tau_xP_y)(-z)\right|\lesssim\bigg(\frac{1}{y^2+(|x|-|z|)^2}\bigg)^{\lambda+\frac{3}{2}},
\end{eqnarray}
\begin{eqnarray}\label{D-Poisson-ker-114}
\left|\partial_x(\tau_xQ_y)(-z)\right|\lesssim\bigg(\frac{1}{y^2+(|x|-|z|)^2}\bigg)^{\lambda+1},\ \ \ \left|\partial_y(\tau_xQ_y)(-z)\right|\lesssim\bigg(\frac{1}{y^2+(|x|-|z|)^2}\bigg)^{\lambda+1}
\end{eqnarray}
\begin{eqnarray}\label{D-Poisson-ker-116}
\left|\partial_y\partial_y(\tau_xQ_y)(-t)\right|\lesssim\bigg(\frac{1}{y^2+(|x|-|z|)^2}\bigg)^{\lambda+\frac{3}{2}},\ \ \ \left|\partial_x\partial_x(\tau_xQ_y)(-t)\right|\lesssim\bigg(\frac{1}{y^2+(|x|-|z|)^2}\bigg)^{\lambda+\frac{3}{2}}.
\end{eqnarray}

\end{lemma}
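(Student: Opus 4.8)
The plan is to work directly from the explicit $\theta$-integral representations of the two kernels furnished by Proposition~\ref{Poisson-a}(i) and Proposition~\ref{conjugate-Poisson-a}(i), to differentiate under the integral sign, and to estimate the differentiated integrand pointwise in $\theta$; the Fourier-side formulas in Proposition~\ref{Poisson-a}(ii) and Proposition~\ref{conjugate-Poisson-a}(ii) are less convenient here because $\partial_x$ differs from $D_x$ by the reflection term. For $x,z\in\RR\setminus\{0\}$, $y>0$ and $\theta\in(0,\pi)$ write
$$D=D(x,z,\theta)=y^2+x^2+z^2-2|xz|\cos\theta ,$$
and record the elementary identity $D=\big(y^2+(|x|-|z|)^2\big)+2|xz|(1-\cos\theta)$, which gives the uniform lower bound $D\ge y^2+(|x|-|z|)^2=:\delta$ for every $\theta$. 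Since $\lambda>0$ the weight $\sin^{2\lambda-1}\theta$ is integrable on $(0,\pi)$, and since $1+{\rm sgn}(xz)\cos\theta$ is locally constant in $x$ off $x=0$, differentiation in $x$ and $y$ under the integral sign is legitimate, with the pointwise bounds below serving as the dominating functions. (The slice $x=0$ is negligible, and if wanted the estimates there come from $(\tau_0P_y)(-z)=P_y(z)$ and $(\tau_0Q_y)(-z)=Q_y(z)$.)

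The core of the argument is a short list of elementary estimates holding uniformly in $\theta\in(0,\pi)$: $y\le D^{1/2}$; $|\partial_xD|=2\big||x|-|z|\cos\theta\big|\lesssim D^{1/2}$ and $\big||x|-|z|\cos\theta\big|\lesssim D^{1/2}$; $|\partial_x^2D|\le 2$; and, in the factorised shape in which the kernels occur, $|x-z|=\big||x|-|z|\big|\le D^{1/2}$ when ${\rm sgn}(xz)=1$, while $|x-z|\,(1-\cos\theta)\lesssim D^{1/2}$ when ${\rm sgn}(xz)=-1$. Each of these reduces, using $(1-\cos\theta)^2\le 2(1-\cos\theta)\le 4$ together with the decomposition of $D$ above, to a two-case split: if $|z|\le 2|x|$ one absorbs the offending term into the cross term $2|xz|(1-\cos\theta)\le D$, and if $|z|>2|x|$ then $(|x|-|z|)^2\ge|z|^2/4$ already dominates; symmetric considerations apply with the roles of $x$ and $z$ exchanged.

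Granting this, the four families of inequalities follow by direct differentiation. For $P$: $\partial_y\frac{y}{D^{\lambda+1}}=\frac{D-2(\lambda+1)y^2}{D^{\lambda+2}}$, whose numerator is $O(D)$, so the quantity is $O(D^{-\lambda-1})$; and $\partial_x\frac{y(1+{\rm sgn}(xz)\cos\theta)}{D^{\lambda+1}}$ equals, up to a bounded factor, $\frac{y\,\partial_xD}{D^{\lambda+2}}$, which is $\lesssim y\,D^{1/2}\,D^{-\lambda-2}\le D^{-\lambda-1}$. Every further differentiation loses one more power of $D$ in the denominator but supplies an extra numerator factor of size $O(D^{1/2})$, hence gives $O(D^{-\lambda-3/2})$: e.g.\ $\partial_y^2\frac{y}{D^{\lambda+1}}$ and $\partial_x^2\frac{y(1+{\rm sgn}(xz)\cos\theta)}{D^{\lambda+1}}$ expand, after using $|\partial_x^2D|\le2$ and $(\partial_xD)^2\lesssim D$, into sums of terms bounded by $y\,D^{-\lambda-2}\le D^{-\lambda-3/2}$. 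For $Q$ the only new point is the product rule on the factor $x-z$: $\partial_x\big[(x-z)D^{-\lambda-1}\big]=D^{-\lambda-1}-(\lambda+1)(x-z)\,\partial_xD\,D^{-\lambda-2}$, and both terms are $O(D^{-\lambda-1})$ because $|x-z|\lesssim D^{1/2}$ and $|\partial_xD|\lesssim D^{1/2}$; the $y$-derivative and the two second derivatives are treated the same way, each losing at most one factor $D^{1/2}$ relative to the corresponding bound for $P$. Finally one replaces $D$ by $\delta=y^2+(|x|-|z|)^2$, which no longer depends on $\theta$, and integrates against $\sin^{2\lambda-1}\theta\,d\theta$ over $(0,\pi)$; this yields exactly the four estimates (\ref{D-Poisson-ker-110})--(\ref{D-Poisson-ker-116}).

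The only real difficulty, and thus the main obstacle, is the bookkeeping behind the second paragraph: verifying $\big||x|-|z|\cos\theta\big|\lesssim D^{1/2}$ and $|x-z|(1-\cos\theta)\lesssim D^{1/2}$ uniformly in $\theta$ and in both sign regimes of $xz$, and then, for each of the eight derivatives, tallying how many factors $D^{1/2}$ the differentiated numerator contributes against the powers of $D$ lost from the denominator. Beyond this there is no analytic subtlety, since the differentiations never touch the $\theta$-integration and its integrable weight only produces a constant depending on $\lambda$.
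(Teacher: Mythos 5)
Your proof is correct and follows essentially the same route as the paper: differentiate the $\theta$-integral representations of $(\tau_xP_y)(-z)$ and $(\tau_xQ_y)(-z)$ under the integral sign and bound the resulting integrand pointwise by powers of $D=y^2+x^2+z^2-2|xz|\cos\theta\ge y^2+(|x|-|z|)^2$, then integrate the weight $\sin^{2\lambda-1}\theta$. You actually supply more detail than the paper (which only writes out $\partial_x$ and $\partial_y$ of the Poisson kernel and asserts the rest "in the same way"), in particular the key intermediate bounds $\big||x|-|z|\cos\theta\big|\lesssim D^{1/2}$ and $|x-z|(1+\mathrm{sgn}(xz)\cos\theta)\lesssim D^{1/2}$, which all check out.
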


\begin{proof}
For the case when $z=0$ (or $x=0$),  we have $(\tau_xP_y)(0)=P_y(x)=m_{\lambda}y(y^2+x^2)^{-\lambda-1}$ and $(\tau_xQ_y)(0)=Q_y(x)=m_{\lambda}x(y^2+x^2)^{-\lambda-1}$, where
$m_\lambda=2^{\lambda+1/2}\Gamma(\lambda+1)/\sqrt{\pi}$. We could obtain the Formulas\,(\ref{D-Poisson-ker-110}, \ref{D-Poisson-ker-112}, \ref{D-Poisson-ker-114}, \ref{D-Poisson-ker-116}) directly.

Then, we will only consider the case when $xz>0$\,(The case $xz<0$ can be discussed in the same way ). By Proposition\,\ref{Poisson-a}, we could deduce that
\begin{eqnarray*}
\int_0^\pi\bigg|\frac{\partial}{\partial x}\frac{y(1+\cos\theta)\sin^{2\lambda-1}\theta
}{\big(y^2+x^2+z^2-2xz\cos\theta\big)^{\lambda+1}}\bigg|
d\theta&=&\int_0^\pi\bigg|\frac{y(\lambda+1)(2x-2z\cos\theta)(1+\cos\theta)
}{\big(y^2+x^2+z^2-2xz\cos\theta\big)^{\lambda+2}}\sin^{2\lambda-1}\theta\bigg|
d\theta
\\ \nonumber&\leq& C \bigg(\frac{1}{y^2+(|x|-|z|)^2}\bigg)^{\lambda+1}.
\end{eqnarray*}
Thus
\begin{eqnarray}\label{a1}
\left|\partial_x(\tau_xP_y)(-z)\right|\lesssim\bigg(\frac{1}{y^2+(|x|-|z|)^2}\bigg)^{\lambda+1}.
\end{eqnarray}

By Proposition\,\ref{Poisson-a}, we could also deduce that
\begin{eqnarray*}
& &\int_0^\pi\bigg|\frac{\partial}{\partial y}\frac{y(1+\cos\theta)\sin^{2\lambda-1}\theta
}{\big(y^2+x^2+z^2-2xz\cos\theta\big)^{\lambda+1}}\bigg|
d\theta
\\ \nonumber &\leq&\int_0^\pi\bigg|(\lambda+1)\frac{y^2(1+\cos\theta)\sin^{2\lambda-1}\theta
}{\big(y^2+x^2+z^2-2xz\cos\theta\big)^{\lambda+2}}\bigg|
d\theta+\int_0^\pi\bigg|\frac{(1+\cos\theta)\sin^{2\lambda-1}\theta
}{\big(y^2+x^2+z^2-2xz\cos\theta\big)^{\lambda+1}}\bigg|
d\theta
\\ \nonumber&\leq& C \bigg(\frac{1}{y^2+(|x|-|z|)^2}\bigg)^{\lambda+1}.
\end{eqnarray*}
Thus
\begin{eqnarray}\label{a2}
\left|\partial_y(\tau_xP_y)(-z)\right|\lesssim\bigg(\frac{1}{y^2+(|x|-|z|)^2}\bigg)^{\lambda+1}.
\end{eqnarray}

In the same way as Formulas\,(\ref{a1},\,\ref{a2}), by Proposition\,\ref{Poisson-a} and Proposition\,\ref{conjugate-Poisson-a}, we could obtain
Formulas\,(\ref{D-Poisson-ker-110}, \ref{D-Poisson-ker-112}, \ref{D-Poisson-ker-114}, \ref{D-Poisson-ker-116}) for the case $xz>0$. This proves the Lemma.

\end{proof}

Thus by the Lemma\,\ref{estimation}, we could obtain the following Lemma:
\begin{lemma}\label{estimation2}

For $y>0$, $t>0$, $x,\,u\in\RR$, we have the following estimations for the $\lambda$-Poisson kernel $(\tau_{t^{-1}x}P_{t^{-1}y})(-u)$ and the conjugate  $\lambda$-Poisson kernel $(\tau_{t^{-1}x}Q_{t^{-1}y})(-u)$
\begin{eqnarray*}
& &\left|\partial_{x}(\tau_{t^{-1}x}P_{t^{-1}y})(-u)\right|+\left|\partial_y(\tau_{t^{-1}x}P_{t^{-1}y})(-u)\right|\lesssim t^{-1}\bigg(\frac{1}{(t^{-1}y)^2+(|t^{-1}x|-|u|)^2}\bigg)^{\lambda+1},
\end{eqnarray*}
\begin{eqnarray*}
& &\left|\partial_y\partial_y(\tau_{t^{-1}x}P_{t^{-1}y})(-u)\right|+\left|\partial_x\partial_x(\tau_{t^{-1}x}P_{t^{-1}y})(-u)\right|\lesssim t^{-2}\bigg(\frac{1}{(t^{-1}y)^2+(|t^{-1}x|-|u|)^2}\bigg)^{\lambda+\frac{3}{2}},
\end{eqnarray*}
\begin{eqnarray*}
& &\left|\partial_x(\tau_{t^{-1}x}Q_{t^{-1}y})(-u)\right|+\left|\partial_y(\tau_{t^{-1}x}Q_{t^{-1}y})(-u)\right|\lesssim t^{-1}\bigg(\frac{1}{(t^{-1}y)^2+(|t^{-1}x|-|u|)^2}\bigg)^{\lambda+1},
\end{eqnarray*}
\begin{eqnarray*}
& &\left|\partial_y\partial_y(\tau_{t^{-1}x}Q_{t^{-1}y})(-u)\right|+\left|\partial_x\partial_x(\tau_{t^{-1}x}Q_{t^{-1}y})(-u)\right|\lesssim t^{-2}\bigg(\frac{1}{(t^{-1}y)^2+(|t^{-1}x|-|u|^2}\bigg)^{\lambda+\frac{3}{2}}.
\end{eqnarray*}

\end{lemma}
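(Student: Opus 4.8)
The plan is to deduce Lemma~\ref{estimation2} from Lemma~\ref{estimation} by a simple dilation, since the kernels occurring here are exactly the dilates of the kernels estimated there. Fix $u\in\RR$ and $t>0$, and set $G(a,b)=(\tau_aP_b)(-u)$ and $H(a,b)=(\tau_aQ_b)(-u)$, regarded as functions of $a\in\RR$ and $b>0$. Then
$$(\tau_{t^{-1}x}P_{t^{-1}y})(-u)=G(t^{-1}x,t^{-1}y),\qquad (\tau_{t^{-1}x}Q_{t^{-1}y})(-u)=H(t^{-1}x,t^{-1}y),$$
and $t^{-1}y>0$ whenever $y>0$, so the point $(t^{-1}x,t^{-1}y)$ always lies in the domain of $G$ and $H$.

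First I would apply the chain rule in the outer variables $(x,y)$: $\partial_x[G(t^{-1}x,t^{-1}y)]=t^{-1}(\partial_aG)(t^{-1}x,t^{-1}y)$ and $\partial_y[G(t^{-1}x,t^{-1}y)]=t^{-1}(\partial_bG)(t^{-1}x,t^{-1}y)$, while each of the second derivatives $\partial_x\partial_x$ and $\partial_y\partial_y$ likewise produces a factor $t^{-2}$ in front of the corresponding second-order partial of $G$, all evaluated at $(t^{-1}x,t^{-1}y)$; the same holds with $H$ in place of $G$.

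Next I would invoke Lemma~\ref{estimation} with $(x,y,z)$ replaced by $(t^{-1}x,t^{-1}y,u)$. The bound \eqref{D-Poisson-ker-110} gives $|(\partial_aG)(t^{-1}x,t^{-1}y)|$ and $|(\partial_bG)(t^{-1}x,t^{-1}y)|$ each $\lesssim\big((t^{-1}y)^2+(|t^{-1}x|-|u|)^2\big)^{-(\lambda+1)}$, and \eqref{D-Poisson-ker-112} controls the two second-order partials of $G$ by the $(\lambda+\tfrac32)$-th power of $\big((t^{-1}y)^2+(|t^{-1}x|-|u|)^2\big)^{-1}$; the estimates \eqref{D-Poisson-ker-114} and \eqref{D-Poisson-ker-116} do the same for $H$. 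Multiplying by the dilation factors $t^{-1}$, respectively $t^{-2}$, coming from the previous step, and adding the two resulting terms (each at most their sum), yields precisely the four inequalities claimed, first for $P$ and then verbatim for $Q$.

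Since every step is a one-line application of the chain rule or of Lemma~\ref{estimation}, there is no genuine difficulty here; the only point deserving a remark is that the pointwise estimates of Lemma~\ref{estimation} hold uniformly over all $(x,y,z)\in\RR\times(0,\infty)\times\RR$, so that the substitution $x\mapsto t^{-1}x$, $y\mapsto t^{-1}y$ is legitimate and the implicit constants remain independent of $t$, $x$, $y$, and $u$.
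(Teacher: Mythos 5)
Your proposal is correct and is exactly the argument the paper intends: the paper gives no written proof beyond the remark that Lemma~\ref{estimation2} follows from Lemma~\ref{estimation}, and your chain-rule computation (each outer derivative in $x$ or $y$ contributing a factor $t^{-1}$, then evaluating the uniform pointwise bounds of Lemma~\ref{estimation} at the dilated point $(t^{-1}x,t^{-1}y)$ with $z=u$) supplies precisely the omitted routine details. Your closing remark about the uniformity of the implicit constants is the right thing to flag, since it is what makes the substitution legitimate.
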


\subsection{The boundedness of Ces\`{a}ro operator on $H_{\lambda}^p(\RR_+^2)$ }\label{ce Hp4}

\begin{proposition}\label{cesa6}Ces\`{a}ro operator $C_{\alpha}$ defined on $H_{\lambda}^p(\RR_+^2)$ is a bounded linear operator on $H_{\lambda}^p(\RR_+^2)$ for $1\leq p<\infty$:
$$ \|C_{\alpha}F\|_{H_{\lambda}^p(\RR_+^2)}\leq C\|F\|_{H_{\lambda}^p(\RR_+^2)},$$
where C is dependent on $\alpha$, $p$ and $\lambda$.
\end{proposition}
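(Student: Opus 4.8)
The plan is to separate the statement into two independent parts. The inequality $\|C_\alpha F\|_{H_\lambda^p(\RR_+^2)}\leq C\|F\|_{H_\lambda^p(\RR_+^2)}$ will be inherited directly from the $L^p$-theory of Section~\ref{ce Lp}; the only genuinely new point is that $C_\alpha F$ is again $\lambda$-analytic, i.e. that $C_\alpha$ maps $H_\lambda^p(\RR_+^2)$ into itself, and this is where the $\lambda$-Poisson representation of Proposition~\ref{Poisson-conjugate-CR}(vii) and the kernel estimates of Lemma~\ref{estimation2} will be used.

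For the norm bound I would view $F=u+iv\in H_\lambda^p(\RR_+^2)$ as the two-component vector function $\overrightarrow{F}=(u,v)$ on $\RR_+^2$, which is precisely the situation of Section~\ref{ce Lp} with $N=1$, $\lambda_1=\lambda$ and $d\mu(x)=|x|^{2\lambda}dx$. Since $|\overrightarrow{F}|=|F|$ we have $\|\overrightarrow{F}\|_{L^p(\RR_+^2,\,d\mu)}=c_\lambda^{-1/p}\|F\|_{H_\lambda^p(\RR_+^2)}$, and since $C_\alpha$ acts componentwise with $C_\alpha F:=C_\alpha u+iC_\alpha v$ and $|C_\alpha\overrightarrow{F}|=|C_\alpha F|$, Proposition~\ref{cesar1} (or Theorem~\ref{cesar2}) applied to $\overrightarrow{F}$ gives at once
$$\|C_\alpha F\|_{H_\lambda^p(\RR_+^2)}=c_\lambda^{1/p}\|C_\alpha\overrightarrow{F}\|_{L^p(\RR_+^2,\,d\mu)}\leq c_\lambda^{1/p}C\|\overrightarrow{F}\|_{L^p(\RR_+^2,\,d\mu)}=C\|F\|_{H_\lambda^p(\RR_+^2)},$$
with $C=C(\alpha,p,\lambda)$. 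Linearity of $C_\alpha$ is clear from its definition, so it remains only to verify that $C_\alpha F$ is $\lambda$-analytic on $\RR_+^2$.

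Since $1\leq p<\infty$, Proposition~\ref{Poisson-conjugate-CR}(vii) represents $F$ as the $\lambda$-Poisson integral of its boundary function $F_b\in L^p_\lambda(\RR)$, so that for each fixed $t\in(0,1)$
$$t^{-1}F(t^{-1}x,t^{-1}y)=c_\lambda t^{-1}\int_{\RR}F_b(w)\,(\tau_{t^{-1}x}P_{t^{-1}y})(-w)\,|w|^{2\lambda}\,dw.$$
Under the dilation $(x,y)\mapsto(t^{-1}x,t^{-1}y)$ the Dunkl operator scales by $t^{-1}$ — the differentiation term and the reflection term of $D_x$ pick up the same factor — and $\partial_y$ scales the same way, so each dilate $G_t(x,y):=t^{-1}F(t^{-1}x,t^{-1}y)$ satisfies $T_{\bar z}G_t(x,y)=t^{-2}(T_{\bar z}F)(t^{-1}x,t^{-1}y)=0$, i.e. is $\lambda$-analytic. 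Writing $C_\alpha F=\int_0^1 G_t\,\phi_\alpha(t)\,dt$ and differentiating under the integral sign then yields
$$T_{\bar z}(C_\alpha F)(x,y)=\int_0^1 t^{-2}(T_{\bar z}F)(t^{-1}x,t^{-1}y)\,\phi_\alpha(t)\,dt=0,$$
so $C_\alpha F$ is $\lambda$-analytic, and together with the norm bound this proves the proposition.

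The step that requires real work — and which I expect to be the main obstacle — is the justification of differentiating under the $dt$- and $dw$-integrals, for which one must produce a $t$-integrable majorant of $\partial_x G_t$ and $\partial_y G_t$ (and of $G_t(\pm x,y)$, for the reflection term of $D_x$), locally uniformly in $(x,y)$. Here I would differentiate the kernel $(\tau_{t^{-1}x}P_{t^{-1}y})(-w)$ under the $w$-integral, bound the result by Lemma~\ref{estimation2} (whose estimates already carry the explicit powers $t^{-1}$, $t^{-2}$), apply Hölder's inequality in $w$ against $\|F_b\|_{L^p_\lambda}$, and check that the remaining power of $t$, multiplied by $\phi_\alpha(t)=\alpha(1-t)^{\alpha-1}$, is integrable on $(0,1)$. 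Near $t=1$ this is immediate since $\phi_\alpha\in L^1(0,1)$ and the kernel and its derivatives are smooth there; near $t=0$ the dilation sends the evaluation height to $t^{-1}y\to\infty$, where the $\lambda$-Poisson kernel decays, and the integrability condition one obtains, after collecting powers of $t$, is equivalent to $p'>1$, i.e. to the hypothesis $p<\infty$. The same pointwise estimates also show that the integral defining $C_\alpha F$ converges absolutely and locally uniformly, so $C_\alpha F$ is a well-defined $C^\infty$ function on $\RR_+^2$ and all the manipulations above are legitimate.
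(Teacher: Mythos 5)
Your proposal is correct and follows essentially the same route as the paper: the norm inequality is inherited from the $L^p$ theory of Section~\ref{ce Lp} (Theorem~\ref{cesar2}), and the real content is the $\lambda$-analyticity of $C_{\alpha}F$, established via the $\lambda$-Poisson representation of Proposition~\ref{Poisson-conjugate-CR}(vii) together with the kernel estimates of Lemma~\ref{estimation2} and H\"older's inequality to justify passing $T_{\bar z}$ under the integrals. The only (harmless) deviation is at the final computation: you use the dilation homogeneity $T_{\bar z}\bigl[t^{-1}F(t^{-1}\cdot,t^{-1}\cdot)\bigr]=t^{-2}(T_{\bar z}F)(t^{-1}\cdot,t^{-1}\cdot)=0$, which is arguably cleaner than the paper's route through the Dunkl-transform representation of the kernels and the eigenrelation $D_x E_{\lambda}(ix\xi)=i\xi E_{\lambda}(ix\xi)$.
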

\begin{proof}
Let $F(x, y)\in H_{\lambda}^p(\RR_+^2)$\,($1\le p<\infty$).  By Proposition\,\ref{Poisson-conjugate-CR}\,{\rm (vii)}, $F(x, y)$
is the $\lambda$-Poisson integral of its boundary values. Let $f(x)$ to be the real parts of the boundary values of  $F(x, y)$ and we could see that $f(x)\in
L^p_{\lambda}(\RR)$. By Proposition\,\ref{Poisson-conjugate-CR} {\rm (i)}  and {\rm (vii)}, we could deduce that
$$F(x, y)=f\ast_\lambda P_y(x)+if\ast_\lambda Q_y(x).$$
Thus we could have
\begin{eqnarray}\label{se}
C_{\alpha}F(x, y)&=&\int_0^1 f\ast_\lambda P_{y/t}(x/t)t^{-1}\phi_\alpha(t)dt+i\int_0^1f\ast_\lambda Q_{y/t}(x/t)t^{-1}\phi_\alpha(t)dt
\\ \nonumber&=&\int_0^1 \bigg(\int_\RR f(u)\tau_{-u}P_{t^{-1}y}(t^{-1}x)|u|^{2\lambda}du\bigg)t^{-1}\phi_\alpha(t)dt
\\ \nonumber&+&i\int_0^1 \bigg(\int_\RR f(u)\tau_{-u}Q_{t^{-1}y}(t^{-1}x)|u|^{2\lambda}du\bigg)t^{-1}\phi_\alpha(t)dt.
\end{eqnarray}
Notice that we could deduce the following Formula\,(\ref{2}) by  Proposition\,\ref{tau-convolution-a}\,{\rm(i)}, {\rm(ii)} and Holder inequality:
\begin{eqnarray}\label{2}
& &\int_0^1 \bigg(\int_\RR
\bigg|f(u)\partial_y\tau_{-u}P_{t^{-1}y}(t^{-1}x)\bigg||u|^{2\lambda}du\bigg)t^{-1}\phi_\alpha(t)dt
\\ \nonumber&\leq&\int_0^1 \bigg(\int_\RR \left|f(u)\right|^p|u|^{2\lambda}du\bigg)^{1/p}\bigg(\int_\RR \bigg|\partial_y\tau_{t^{-1}x}P_{t^{-1}y}(-u)\bigg|^q|u|^{2\lambda}du\bigg)^{1/q}t^{-1}\phi_\alpha(t)dt,
\end{eqnarray}
where $p,\ q$ satisfy $$p\geq1,\  q\geq1,\  \frac{1}{p}+\frac{1}{q}=1.$$
By Lemma\,\ref{estimation2} and Formula\,(\ref{2}), we could deduce the following inequality:
\begin{eqnarray}\label{2*}
& &\int_0^1 \bigg(\int_\RR
\bigg|f(u)\partial_y\tau_{-u}P_{t^{-1}y}(t^{-1}x)\bigg||u|^{2\lambda}du\bigg)t^{-1}\phi_\alpha(t)dt
\\ \nonumber&\leq&\int_0^1 \bigg(\int_\RR \left|f(u)\right|^p|u|^{2\lambda}du\bigg)^{1/p}\bigg(\int_\RR \bigg|t^{-1}\bigg(\frac{1}{(t^{-1}y)^2+(|t^{-1}x|-|u|)^2}\bigg)^{\lambda+1}\bigg|^q|u|^{2\lambda}du\bigg)^{1/q}t^{-1}\phi_\alpha(t)dt.
\end{eqnarray}
Let $u=t^{-1}z$, then we could deduce the following inequality by Formula\,(\ref{2*})
\begin{eqnarray}\label{2**}
& &\int_0^1 \bigg(\int_\RR
\bigg|f(u)\partial_y\tau_{-u}P_{t^{-1}y}(t^{-1}x)\bigg||u|^{2\lambda}du\bigg)t^{-1}\phi_\alpha(t)dt
\\ \nonumber&\lesssim&\int_0^1 \bigg(\int_\RR \left|f(u)\right|^p|u|^{2\lambda}du\bigg)^{1/p}\bigg(\int_\RR \bigg|t^{-1}\bigg(\frac{1}{(t^{-1}y)^2+(|t^{-1}x|-|t^{-1}z|)^2}\bigg)^{\lambda+1}\bigg|^qt^{-(2\lambda+1)}|z|^{2\lambda}dz\bigg)^{1/q}t^{-1}\phi_\alpha(t)dt
\\ \nonumber&\lesssim&\bigg(\int_\RR \left|f(u)\right|^p|u|^{2\lambda}du\bigg)^{1/p}\bigg(\int_\RR \bigg|\bigg(\frac{1}{(y)^2+(|x|-|z|)^2}\bigg)^{\lambda+1}\bigg|^q|z|^{2\lambda}dz \bigg)^{1/q}\int_0^1t^{2\lambda-\frac{2\lambda+1}{q}}\phi_\alpha(t)dt
\\ \nonumber&<&\infty.
\end{eqnarray}
Thus similar to Formula\,(\ref{2**}), by Lemma\,\ref{estimation2}, we could obtain
\begin{eqnarray}\label{3**}
& &\int_0^1 \bigg(\int_\RR
\bigg|f(u)\partial_x\tau_{-u}P_{t^{-1}y}(t^{-1}x)\bigg||u|^{2\lambda}du\bigg)t^{-1}\phi_\alpha(t)dt
\\ \nonumber&\lesssim&\bigg(\int_\RR \left|f(u)\right|^p|u|^{2\lambda}du\bigg)^{1/p}\bigg(\int_\RR \bigg|\bigg(\frac{1}{(y)^2+(|x|-|z|)^2}\bigg)^{\lambda+1}\bigg|^q|z|^{2\lambda}dz \bigg)^{1/q}\int_0^1t^{2\lambda-\frac{2\lambda+1}{q}}\phi_\alpha(t)dt
\\ \nonumber&<&\infty.
\end{eqnarray}

\begin{eqnarray}\label{4**}
& &\int_0^1 \bigg(\int_\RR
\bigg|f(u)(\partial_y\partial_y+\partial_x\partial_x)\tau_{-u}P_{t^{-1}y}(t^{-1}x)\bigg||u|^{2\lambda}du\bigg)t^{-1}\phi_\alpha(t)dt
\\ \nonumber&\lesssim&\bigg(\int_\RR \left|f(u)\right|^p|u|^{2\lambda}du\bigg)^{1/p}\bigg(\int_\RR \bigg|\bigg(\frac{1}{(y)^2+(|x|-|z|)^2}\bigg)^{\lambda+\frac{3}{2}}\bigg|^q|z|^{2\lambda}dz \bigg)^{1/q}\int_0^1t^{2\lambda-\frac{2\lambda+1}{q}}\phi_\alpha(t)dt
\\ \nonumber&<&\infty.
\end{eqnarray}
In the same way, we could also deduce the following inequalities\,(\ref{5**},\,\ref{6**},\,\ref{7**}):
\begin{eqnarray}\label{5**}
\int_0^1 \bigg(\int_\RR
\bigg|f(u)(\partial_x)\tau_{-u}Q_{t^{-1}y}(t^{-1}x)\bigg||u|^{2\lambda}du\bigg)t^{-1}\phi_\alpha(t)dt&<&\infty,
\end{eqnarray}

\begin{eqnarray}\label{6**}
\int_0^1 \bigg(\int_\RR
\bigg|f(u)(\partial_y)\tau_{-u}Q_{t^{-1}y}(t^{-1}x)\bigg||u|^{2\lambda}du\bigg)t^{-1}\phi_\alpha(t)dt&<&\infty,
\end{eqnarray}

and
\begin{eqnarray}\label{7**}
\int_0^1 \bigg(\int_\RR
\bigg|f(u)(\partial_y\partial_y+\partial_x\partial_x)\tau_{-u}Q_{t^{-1}y}(t^{-1}x)\bigg||u|^{2\lambda}du\bigg)t^{-1}\phi_\alpha(t)dt&<&\infty.
\end{eqnarray}

From inequalities\,(\ref{2**},\,\ref{3**},\,\ref{4**},\,\ref{5**},\,\ref{6**},\,\ref{7**}), we could take $\triangle_{\lambda}$, $\partial_y$ and $D_x$ under integration signs of
Formula\,(\ref{se}).
Notice that we have the following three Formulas holds
$$
(\tau_xP_y)(-u)=c_{\lambda}\int_{\RR}e^{-y|\xi|}E_{\lambda}(ix\xi)E_{\lambda}(-iu\xi)|\xi|^{2\lambda}d\xi,
$$

$$
(\tau_xQ_y)(-u)=-ic_{\lambda}\int_{\RR}({\rm
sgn}\,\xi)e^{-y|\xi|}E_{\lambda}(ix\xi)E_{\lambda}(-iu\xi)|\xi|^{2\lambda}d\xi,
$$
\begin{eqnarray*}\label{D-kernel-2*}
D_x[E_{\lambda}(ix\xi)]=i\xi E_{\lambda}(ix\xi), \ \ \ \ \ \hbox{and} \
\ \ \ E_{\lambda}(ix\xi)|_{x=0}=1.
\end{eqnarray*}
Then we take $\triangle_{\lambda}$, $\partial_y$ and $D_x$ under integration signs of
Formula\,(\ref{se}), and we could deduce that the following holds:
\begin{eqnarray*}
T_{\bar{z}}C_{\alpha}F(x, y)&=&T_{\bar{z}}\int_0^1 f\ast_\lambda P_{y/t}(x/t)t^{-1}\phi_\alpha(t)dt+iT_{\bar{z}}\int_0^1f\ast_\lambda Q_{y/t}(x/t)t^{-1}\phi_\alpha(t)dt
\\ \nonumber &=&T_{\bar{z}}\int_0^1 \bigg(\int_\RR f(u)\tau_{-u}P_{t^{-1}y}(t^{-1}x)|u|^{2\lambda}du\bigg)t^{-1}\phi_\alpha(t)dt
\\ \nonumber&+&iT_{\bar{z}}\int_0^1 \bigg(\int_\RR f(u)\tau_{-u}Q_{t^{-1}y}(t^{-1}x)|u|^{2\lambda}du\bigg)t^{-1}\phi_\alpha(t)dt
\\ \nonumber &=&c_{\lambda}\int_0^1 \int_\RR f(u)\bigg(T_{\bar{z}}\int_{\RR}e^{-t^{-1}y|\xi|}E_{\lambda}(it^{-1}x\xi)E_{\lambda}(-iu\xi)|\xi|^{2\lambda}d\xi\bigg)|u|^{2\lambda}dut^{-1}\phi_\alpha(t)dt
\\ \nonumber&-&c_{\lambda}\int_0^1 \int_\RR f(u)\bigg(T_{\bar{z}}\int_{\RR}({\rm
sgn}\,\xi)e^{-t^{-1}y|\xi|}E_{\lambda}(it^{-1}x\xi)E_{\lambda}(-iu\xi)|\xi|^{2\lambda}d\xi\bigg)|u|^{2\lambda}dut^{-1}\phi_\alpha(t)dt
\\ \nonumber &=&2c_{\lambda}\int_0^1 \int_\RR f(u)\bigg(T_{\bar{z}}\int_0^{+\infty}e^{-t^{-1}y|\xi|}E_{\lambda}(it^{-1}x\xi)E_{\lambda}(-iu\xi)|\xi|^{2\lambda}d\xi\bigg)|u|^{2\lambda}dut^{-1}\phi_\alpha(t)dt
\\ \nonumber&=&0.
\end{eqnarray*}

Thus $(C_{\alpha}F)(x, y)$ is a $\lambda$-analytic function, together with Theorem\,\ref{cesar2}, we could deduce that Ces\`{a}ro operator $C_{\alpha}$ defined on $H_{\lambda}^p(\RR_+^2)$ is a bounded linear operator on $H_{\lambda}^p(\RR_+^2)$ for $1\leq p<\infty$:
$$ \|C_{\alpha}F\|_{H_{\lambda}^p(\RR_+^2)}\leq C\|F\|_{H_{\lambda}^p(\RR_+^2)},$$
where C is a constant dependent on $\alpha$, $p$ and $\lambda$.
\end{proof}

\begin{proposition}\label{cesa4}Ces\`{a}ro operator $C_{\alpha}$ defined on $H_{\lambda}^p(\RR_+^2)$ is a bounded linear operator on $H_{\lambda}^p(\RR_+^2)$ for $\frac{2\lambda}{2\lambda+1}< p\leq1$:
$$ \|C_{\alpha}f\|_{H_{\lambda}^p(\RR_+^2)}\leq C\|f\|_{H_{\lambda}^p(\RR_+^2)}.$$
C is dependent on $\alpha$, $p$ and $\lambda$.
\end{proposition}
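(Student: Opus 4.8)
The plan is to read the size estimate straight off Theorem~\ref{cesar2}, and to spend the real effort on the one feature Theorem~\ref{cesar2} does not see, namely that $C_\alpha$ sends $\lambda$-analytic functions to $\lambda$-analytic functions. Fix $F\in H^p_\lambda(\RR^2_+)$ with $\tfrac{2\lambda}{2\lambda+1}<p\le1$ and set $u=\operatorname{Re}F$, $v=\operatorname{Im}F$. Then $\vec F=(u,v)$ belongs to $L^p(\RR^2_+,d\mu)$ with $N=1$ and $d\mu(x)=|x|^{2\lambda}\,dx$, and $\|\vec F\|_{L^p(\RR^2_+,d\mu)}=c_\lambda^{-1/p}\|F\|_{H^p_\lambda(\RR^2_+)}$. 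Since $C_\alpha$ is linear and acts coordinate-wise, $C_\alpha\vec F=(C_\alpha u,C_\alpha v)=(\operatorname{Re}C_\alpha F,\operatorname{Im}C_\alpha F)$, so $|C_\alpha\vec F|=|C_\alpha F|$; Theorem~\ref{cesar2} therefore already yields
$$\sup_{y>0}\Big(\int_\RR|C_\alpha F(x,y)|^p|x|^{2\lambda}dx\Big)^{1/p}=\|C_\alpha\vec F\|_{L^p(\RR^2_+,d\mu)}\le C\|\vec F\|_{L^p(\RR^2_+,d\mu)}\le C'\|F\|_{H^p_\lambda(\RR^2_+)},$$
the a.e.\ convergence of the defining $t$-integral being part of Theorem~\ref{cesar2} via Proposition~\ref{ko}.

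It remains to show $C_\alpha F$ is $\lambda$-analytic on $\RR^2_+$, and here I would route through the description of $H^p_\lambda(\RR^2_+)$ in Lemma~\ref{Hardy-thm-0}. Let $\phi$ be the function associated to $F$ there, so $F(x,y)=c_\lambda\int_0^\infty e^{-y|\xi|}\phi(\xi)E_\lambda(ix\xi)|\xi|^{2\lambda}d\xi$, with $\phi$ continuous, vanishing on $(-\infty,0]$, and $\xi\mapsto e^{-y|\xi|}\phi(\xi)$ bounded and in $L^1_\lambda(\RR)$ for every $y>0$. Substituting this into $C_\alpha F(x,y)=\int_0^1 t^{-1}\phi_\alpha(t)F(t^{-1}x,t^{-1}y)\,dt$, using $|E_\lambda(iz)|\le1$ for $z\in\RR$, and changing variables $\eta=t^{-1}\xi$ in the inner integral, a Fubini argument gives
$$C_\alpha F(x,y)=c_\lambda\int_0^\infty e^{-y|\eta|}\Phi(\eta)E_\lambda(ix\eta)|\eta|^{2\lambda}d\eta,\qquad \Phi(\eta):=\int_0^1 t^{2\lambda}\phi_\alpha(t)\,\phi(t\eta)\,dt.$$
The Fubini step is legitimate because $\int_0^1 t^{-1}\phi_\alpha(t)\,\big\|e^{-t^{-1}y|\cdot|}\phi\big\|_{L^1_\lambda}\,dt<\infty$: near $t=1$ this is clear since $t^{-1}\phi_\alpha(t)$ is integrable there and $t^{-1}y$ stays in a compact subset of $(0,\infty)$; near $t=0$ one uses $t^{-1}y\ge y$ together with the decay $\big\|e^{-Y|\cdot|}\phi\big\|_{L^1_\lambda}\lesssim Y^{-(2\lambda+1)}$ as $Y\to\infty$, which follows from the bound $|\phi(\xi)|\lesssim e^{|\xi|}$ coming from Lemma~\ref{Hardy-thm-0}(iii) at a fixed height.

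One then checks that $\Phi$ satisfies conditions (i)--(iv) of Lemma~\ref{Hardy-thm-0}: continuity of $\Phi$ and $\Phi\equiv0$ on $(-\infty,0]$ follow from the corresponding properties of $\phi$ by dominated convergence; boundedness and $L^1_\lambda$-integrability of $\eta\mapsto e^{-y|\eta|}\Phi(\eta)$ follow by splitting $\int_0^1$ at $t=\tfrac12$ and comparing with $e^{-y|\eta|}\phi$; and condition (iv) is precisely the finiteness of $\sup_{y>0}\int_\RR|C_\alpha F(x,y)|^p|x|^{2\lambda}dx$, which is the bound just obtained from Theorem~\ref{cesar2}. The ``if and only if'' half of Lemma~\ref{Hardy-thm-0} then gives $C_\alpha F\in H^p_\lambda(\RR^2_+)$, and combining with the first display produces $\|C_\alpha F\|_{H^p_\lambda(\RR^2_+)}\le C\|F\|_{H^p_\lambda(\RR^2_+)}$.

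The main obstacle is thus not the size estimate — which is free from Theorem~\ref{cesar2} — but the verification that $C_\alpha F$ stays $\lambda$-analytic: producing the multiplier $\Phi$ and controlling the endpoint behaviour ($t\to0$ and $t\to1$) of the relevant integrals uniformly in $(x,y)$. An alternative route, avoiding Lemma~\ref{Hardy-thm-0}, is to use that $F\in C^\infty(\RR^2_+)$ with $T_{\bar z}F\equiv0$, to note that both $D_x$ and $\partial_y$ rescale by $t^{-1}$ under $(x,y)\mapsto(t^{-1}x,t^{-1}y)$ so that $T_{\bar z}\big[F(t^{-1}x,t^{-1}y)\big]=t^{-1}(T_{\bar z}F)(t^{-1}x,t^{-1}y)=0$, and then to differentiate $C_\alpha F(x,y)=\int_0^1 t^{-1}\phi_\alpha(t)F(t^{-1}x,t^{-1}y)\,dt$ under the integral sign; the domination for this interchange reduces, via the standard pointwise bounds $|F(x,y)|+y\big(|D_xF(x,y)|+|\partial_yF(x,y)|\big)\lesssim y^{-(2\lambda+1)/p}\|F\|_{H^p_\lambda(\RR^2_+)}$, to the integrability of $t^{(2\lambda+1)/p-1}\phi_\alpha(t)$ on $(0,1)$.
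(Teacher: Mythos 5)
Your argument is correct and is essentially the paper's own proof: the multiplier $\Phi(\eta)=\int_0^1 t^{2\lambda}\phi_\alpha(t)\phi(t\eta)\,dt$ you construct is exactly the paper's $B_\alpha\phi$, and both proofs identify $C_\alpha F$ with the $\lambda$-analytic function generated by this multiplier via Fubini and the scaling identity $t^{-(2\lambda+1)}F(t^{-1}x,t^{-1}y)=c_\lambda\int_0^\infty e^{-y|\xi|}\phi(t\xi)E_\lambda(ix\xi)|\xi|^{2\lambda}d\xi$, then invoke the characterization in Lemma~\ref{Hardy-thm-0} together with Theorem~\ref{cesar2}. Your justification of the Fubini step and of conditions (i)--(iii) for $\Phi$ is in fact somewhat more detailed than the paper's, which simply asserts these are easy to check.
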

\begin{proof}
From Lemma\,\ref{Hardy-thm-0}, we could deduce that for $F(x, y)\in H_{\lambda}^p(\RR_+^2)$\,$(\frac{2\lambda}{2\lambda+1}< p\leq1)$, if and only if there exists a function $\phi(\xi)$ satisfying {\rm(i)},\, {\rm(ii)},\, {\rm(iii)} ,\,{\rm(iv)} in Lemma\,\ref{Hardy-thm-0}. We define the operator $B_\alpha$ as
following:
$$B_\alpha\phi(\xi)=\int_0^1\phi(t\xi)\phi_\alpha(t)|t|^{2\lambda}dt.$$
Let $B(x, y)$ to denote as the function
$$B(x, y)=c_{\lambda}\int_0^{\infty}e^{-y|\xi|}B_\alpha\phi(\xi)E_{\lambda}(ix\xi)|\xi|^{2\lambda}d\xi.$$
Notice that for $0<t\leq1$, $\big|e^{-y|\xi|}\phi(t\xi)\big|=\big|e^{-y|\xi|/2}\big(e^{-y|\xi|/2}\phi(t\xi)\big)\big|<\big|e^{-y|\xi|/2}\big(e^{-ty|\xi|/2}\phi(t\xi)\big)\big|.$ By Lemma\,\ref{Hardy-thm-0},
the function $e^{-ty|\xi|/2}\phi(t\xi)$ is bounded on $\RR$, thus $e^{-y|\xi|}\phi(t\xi)\in L_{\lambda}^1(\RR)$. Then by Fubini theorem, we could write $B(x, y)$ as:
\begin{eqnarray}\label{ast1}
B(x, y)&=&c_{\lambda}\int_0^{\infty}e^{-y|\xi|}B_\alpha\phi(\xi)E_{\lambda}(ix\xi)|\xi|^{2\lambda}d\xi
\nonumber\\&=&c_{\lambda}\int_0^{\infty}e^{-y|\xi|}\int_0^1\phi(t\xi)\phi_\alpha(t)|t|^{2\lambda}E_{\lambda}(ix\xi)|\xi|^{2\lambda}dtd\xi
\nonumber\\&=&c_{\lambda}\int_0^1\int_0^{\infty}e^{-y|\xi|}\phi(t\xi)\phi_\alpha(t)|t|^{2\lambda}E_{\lambda}(ix\xi)|\xi|^{2\lambda}d\xi dt.
 \end{eqnarray}

Notice that
\begin{eqnarray}\label{ast2}
\frac{1}{t^{2\lambda+1}}F(t^{-1}x, t^{-1}y)=c_{\lambda}\int_0^{\infty}e^{-y|\xi|}\phi(t\xi)E_{\lambda}(ix\xi)|\xi|^{2\lambda}d\xi,
 \end{eqnarray}
where $F(x, y)\in  H_{\lambda}^p(\RR_+^2)$ for $\frac{2\lambda}{2\lambda+1}< p\leq1$,  and that $F(x, y)$ and $\phi(\xi)$ satisfy {\rm(i)},\, {\rm(ii)},\, {\rm(iii)} ,\,{\rm(iv)} in Lemma\,\ref{Hardy-thm-0}:\  $F(x, y)=c_{\lambda}\int_0^{\infty}e^{-y|\xi|}\phi(\xi)E_{\lambda}(ix\xi)|\xi|^{2\lambda}d\xi.$
Thus by Formulas\,(\ref{ast1},\,\ref{ast2}), we could have
$$B(x, y)=\int_0^1 F(t^{-1}x, t^{-1}y)t^{-1}\phi_\alpha(t)dt =(C_\alpha F)(x, y).$$
By Theorem\,\ref{cesar2}, we could deduce that
$$ \|C_{\alpha}F\|_{L^p(\RR^{2}_+, |x|^{2\lambda }dx))}\leq C\|F\|_{L^p(\RR^{2}_+, |x|^{2\lambda }dx)}.$$
It is easy to check that the function $B_\alpha\phi(\xi)$ satisfies {\rm(i)},\, {\rm(ii)},\, {\rm(iii)} ,\,{\rm(iv)} in Lemma\,\ref{Hardy-thm-0}, thus we could deduce that
$B(x, y)=(C_\alpha F)(x, y)\in H_{\lambda}^p(\RR_+^2)$. Together with Theorem\,\ref{cesar2}, we could deduce the Propositon\,\ref{cesa4} that Ces\`{a}ro operator $C_{\alpha}$ defined on $H_{\lambda}^p(\RR_+^2)$ is a bounded linear operator on $H_{\lambda}^p(\RR_+^2)$ for $\frac{2\lambda}{2\lambda+1}< p\leq1$.
\end{proof}
By Proposition\,\ref{cesa6} and Proposition\,\ref{cesa4}, we could obtain the following theorem:
\begin{theorem}\label{cesa0}Ces\`{a}ro operator $C_{\alpha}$ defined on $H_{\lambda}^p(\RR_+^2)$ is a bounded linear operator on $H_{\lambda}^p(\RR_+^2)$ for $\frac{2\lambda}{2\lambda+1}< p<\infty$:
$$ \|C_{\alpha}f\|_{H_{\lambda}^p(\RR_+^2)}\leq C\|f\|_{H_{\lambda}^p(\RR_+^2)},$$
where C is a constant dependent on $\alpha$, $p$ and $\lambda$.
\end{theorem}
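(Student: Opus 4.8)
The plan is to obtain Theorem \ref{cesa0} by merely patching together the two parameter ranges that have already been settled, so that no new analysis is required. First I would invoke Proposition \ref{cesa6}: for every $p$ with $1 \le p < \infty$ it already gives precisely $\|C_{\alpha}F\|_{H_{\lambda}^p(\RR_+^2)} \le C\|F\|_{H_{\lambda}^p(\RR_+^2)}$ with $C = C(\alpha, p, \lambda)$. Recall that this case carries the real content for $p \ge 1$: that $F \in H_{\lambda}^p(\RR_+^2)$ equals the $\lambda$-Poisson integral of its boundary values (Proposition \ref{Poisson-conjugate-CR}(vii)), that the kernel bounds of Lemma \ref{estimation2} permit differentiation under the integral sign and the verification $T_{\bar z}(C_{\alpha}F) = 0$, and that Theorem \ref{cesar2} controls the $L^p(\RR^2_+, d\mu(x))$-norm of the vector formed by the real and imaginary parts of $C_{\alpha}F$.

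Second I would invoke Proposition \ref{cesa4} for the remaining range $\frac{2\lambda}{2\lambda+1} < p \le 1$, which produces the same inequality by passing through the representation of $F$ from Lemma \ref{Hardy-thm-0} via a continuous $\phi$ vanishing on $(-\infty,0]$, the auxiliary operator $B_{\alpha}\phi(\xi) = \int_0^1 \phi(t\xi)\phi_{\alpha}(t)|t|^{2\lambda}\,dt$, the Fubini identification $B(x,y) = (C_{\alpha}F)(x,y)$, the $L^p$-bound of Theorem \ref{cesar2}, and the check that $B_{\alpha}\phi$ again satisfies conditions (i)--(iv) of Lemma \ref{Hardy-thm-0}, so that $C_{\alpha}F$ remains $\lambda$-analytic and therefore lies in $H_{\lambda}^p(\RR_+^2)$.

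The last step is pure bookkeeping: since $\left(\tfrac{2\lambda}{2\lambda+1}, 1\right] \cup [1, \infty) = \left(\tfrac{2\lambda}{2\lambda+1}, \infty\right)$, every $p$ in the asserted range is covered by at least one of the two propositions, and in both of them the constant depends only on $\alpha$, $p$ and $\lambda$; on the overlap $p = 1$ the two estimates are consistent, so there is no gap. I do not anticipate any genuine obstacle here, since the substance of the theorem is entirely contained in Propositions \ref{cesa6} and \ref{cesa4}, and this concluding statement is just the observation that the two ranges exhaust $\left(\tfrac{2\lambda}{2\lambda+1}, \infty\right)$.
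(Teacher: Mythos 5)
Your proposal is correct and matches the paper exactly: the theorem is obtained there by simply combining Proposition~\ref{cesa6} (for $1\le p<\infty$) with Proposition~\ref{cesa4} (for $\frac{2\lambda}{2\lambda+1}<p\le 1$), whose ranges cover $\left(\frac{2\lambda}{2\lambda+1},\infty\right)$. No further argument is needed or given in the paper.
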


\textbf{Declarations}:

\textbf{Availability of data and materials}:Not applicable

\textbf{Competing interests}: The authors declare that they have no competing interests

\textbf{Funding}:Not applicable

\textbf{Authors’ contributions}:The author ZhuoRan Hu finished this paper alone.

\textbf{Acknowledgements}:Not applicable

\textbf{Author}: ZhuoRan Hu


\begin{thebibliography}{}

\bibitem{J.P. Anker} J.P. Anker,  N. B. Salem, J. Dziuba´nski,  N. Hamda, The Hardy Space $H^1(\RR^n)$ in the Rational Dunkl Setting, \it Constr Approx,\rm(2015).

\bibitem{J.P. Anker2} J.P. Anker, J. Dziuba´nski, A. Hejna, Harmonic functions, conjugate Harmonic functions and the Hardy space $H^1$ in the rational Dunkl setting, \it Journal of Fourier Analysis and Applications,\rm(2018).

\bibitem{Du3} C.\,F. Dunkl, Differential-difference  operators  associated  to reflection groups,
                \it Trans. Amer. Math. Soc. \bf 311\rm(1989), 167-183.



\bibitem{Du6} C.\,F. Dunkl, Hankel transforms associated to finite reflection groups, in ``Proc. of the special session on hypergeometric
               functions on domains of positivity, Jack polynomials and applications (Tampa,
                1991)", \it Contemp. Math. \bf 138\rm(1992), 123-138.

\bibitem{Mi1} A. Miyachi, Boundedness of the Ces\`{a}ro Operator in Hardy Spaces, \it J. FOURIER. ANAL. APPL. \bf 10\rm(2004), 83-92.


\bibitem{Ka} Kanjin. Y.  The Hausdorff operators on the real Hardy spaces $H^p(\RR)$, \it Studia Math \bf 148 \rm(2001), 37-45.

\bibitem{Li} Liflyand. E.  and M\'{o}ricz. F., The Hausdorff operator is bounded on the real Hardy space $H^1(\RR)$, \it Proc. AM. Math. Soc. \bf 128\rm(2000), 1391-1396.


\bibitem{Li2} Liflyand. E.  and   Miyachi.A.,   Boundedness of multidimensional  Hausdorff operators in $H^p$ spaces, $0<p<1$, \it TAMS. \bf 128\rm(2018).



\bibitem{ZhongKai Li 3}Zh-K Li and J-Q Liao, Harmonic Analysis Associated with One-dimensional Dunkl Transform, \it J.Approx.\bf37\rm(2013), 233-281.




\bibitem{Xi} J. Miao,   The Ces\`{a}ro  operator is bounded on  $H^p$ for $0<p<1$, \it Proc. AM. Math. Soc. \bf 116\rm(1992), 1077-1079.



\bibitem{MS2} R. Macias and C. Segovia, A decomposition into atoms of distributions on spaces of homogeneous type, \it Adv. in Math.
               \bf 33\rm(1979), 271-309.



\bibitem{Ro1} M. R\"osler, Bessel-type signed hypergroups on $\RR$, in ``Probability measures on groups and related structures XI", Edited
               by H. Heyer and A. Mukherjea, World Scientific, Singapore, 1995, \it pp. \rm 292-304.




\bibitem{Stein}  E.\,M. Stein, Harmonic Analysis Real-Variable Methods Orthogonality and Oscillatory , Princeton University Press, 1993.


\bibitem{U}   A. Uchiyama, A maximal function characterization of $H^p$ on the space of homogeneous type,
                \it Trans. Amer. Math. Soc. \bf262\rm(1980), 579-592.








\end{thebibliography}
\end{document}